\newcommand{\orcid}[1]{\,\href{https://orcid.org/#1}{\includegraphics[width=8pt]{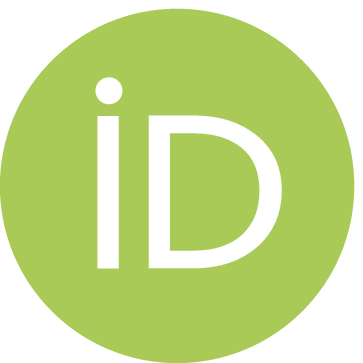}}}
\newcommand{\C}{\mathcal{C}}
\newcommand{\F}{\mathcal{F}}
\newcommand{\I}{\mathcal{I}}
\newcommand{\J}{\mathcal{J}}
\newcommand{\X}{\mathcal{X}}
\newcommand{\Y}{\mathcal{Y}}
\newcommand{\Z}{\mathcal{Z}}
\newtheorem{theorem}{Theorem}
\newtheorem{corollary}{Corollary}
\newtheorem{lemma}{Lemma}
\newtheorem{proposition}{Proposition}
\title{Partitioning an interval graph into subgraphs with small claws}
\author{Rain Jiang\orcid{0000-0002-0144-942X}\qquad
Kai Jiang\orcid{0000-0001-8165-0571}\qquad
Minghui Jiang\orcid{0000-0003-1843-9292}\,\thanks{\texttt{ dr.minghui.jiang at gmail.com}}\medskip\\
Home School, USA}
\date{}
\begin{document}

\maketitle

\begin{abstract}
The claw number of a graph $G$ is the largest number $v$
such that $K_{1,v}$ is an induced subgraph of $G$.
Interval graphs with claw number at most $v$ are cluster graphs when $v = 1$,
and are proper interval graphs when $v = 2$.

Let $\kappa(n,v)$ be the smallest number $k$ such that
every interval graph with $n$ vertices
admits a vertex partition into $k$ induced subgraphs
with claw number at most $v$.
Let $\check\kappa(w,v)$ be the smallest number $k$ such that
every interval graph with claw number $w$
admits a vertex partition into $k$ induced subgraphs
with claw number at most $v$.
We show that
$\kappa(n,v) = \lfloor\log_{v+1} (n v + 1)\rfloor$,
and that
$\lfloor\log_{v+1} w\rfloor + 1 \le \check\kappa(w,v) \le \lfloor\log_{v+1} w\rfloor + 3$.

Besides the combinatorial bounds,
we also present a simple approximation algorithm for partitioning
an interval graph into the minimum number of induced subgraphs
with claw number at most $v$,
with approximation ratio $3$ when $1 \le v \le 2$, and $2$ when $v \ge 3$.
\end{abstract}

\section{Introduction}

The \emph{intersection graph} $G$ of a family $\F$ of sets is a graph such that
the vertices in $G$ correspond to the sets in $\F$, one vertex for each set,
and an edge connects two vertices in $G$ if and only if the corresponding two
sets in $\F$ intersect.
A family $\F$ of sets is called the \emph{representation} of a graph $G$
if $G$ is the intersection graph of $\F$.

An \emph{interval graph} is the intersection graph of a family of open intervals.
Subclasses of interval graphs can be defined by imposing various restrictions on
the interval representation.
A \emph{proper interval graph} is the intersection graph of a family of open
intervals in which no one properly contains another.
A \emph{unit interval graph} is the intersection graph of a family of open
intervals of the same length.
It is well-known
that an interval graph is a proper interval graph if and only if it is a unit
interval graph, and if and only if it is $K_{1,3}$-free;
see~\cite{BW99} for a self-contained elementary proof.

The \emph{claw number} $\psi(G)$ of a graph $G$ is the largest number
$v \ge 0$ such that $G$ contains the star $K_{1,v}$ as an induced subgraph~\cite{AC10}.
In particular, a graph with claw number $0$ is an empty graph without any edges.
A graph with claw number at most $1$ is simply a disjoint union of cliques,
and is known as a \emph{cluster graph}.
Clearly,
every cluster graph is an interval graph,
with claw number at most $1$.
A proper\,/\,unit interval graph is an interval graph with claw number at most $2$.

In this paper, we study vertex-partitions of an interval graph into
induced subgraphs with small claw numbers.
A \emph{$k$-partition} of a graph
refers to a partition of its vertices into $k$ subsets,
and the resulting $k$ vertex-disjoint induced subgraphs.
For $k \ge 1$ and $v \ge 1$, let $\mu(k,v)$ be the largest number $n$ such
that any interval graph with $n$ vertices admits a vertex partition
into $k$ induced subgraphs with claw number at most $v$.
Correspondingly,
for $n \ge 1$ and $v \ge 1$, let $\kappa(n,v)$ be the smallest number $k$ such
that any interval graph with $n$ vertices admits a vertex partition
into $k$ induced subgraphs with claw number at most $v$.

The \emph{subchromatic number} of a graph
is the smallest number $k$ such that the graph admits a $k$-partition
into cluster graphs~\cite{AJHL89}.
Broersma et~al.~\cite[Lemma~2.4]{BFNW02} proved that the subchromatic number of
any interval graph with $n$ vertices is at most $\lfloor\log_2(n+1)\rfloor$
and this bound is best possible.
Thus $\kappa(n,1) = \lfloor\log_2(n+1)\rfloor$.
Gardi~\cite[Proposition~2.2]{Ga11} showed that for all $n \ge 1$, there exists
an interval graph with $n$ vertices that admits no vertex partition into
less than $\lfloor\log_3(2n+1)\rfloor$ proper interval subgraphs.
Thus $\kappa(n,2) \ge \lfloor\log_3(2n+1)\rfloor$.
The matching upper bound of $\kappa(n,2) \le \lfloor\log_3(2n+1)\rfloor$
was not known.

Extending the two previous results~\cite{BFNW02,Ga11},
we determine exact values of $\mu(k,v)$ and $\kappa(n,v)$ for all $v \ge 1$:

\begin{theorem}\label{thm:mu}
For $k \ge 1$ and $v \ge 1$,
$\mu(k,v) = ((v+1)^{k+1} - (v+1)) / v$.
\end{theorem}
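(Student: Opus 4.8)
The plan is to establish the one-step recurrence
\[
\mu(k,v) = (v+1)\bigl(\mu(k-1,v)+1\bigr), \qquad \mu(0,v)=0,
\]
and then solve it: unrolling gives $\mu(k,v)=\sum_{i=1}^{k}(v+1)^i=((v+1)^{k+1}-(v+1))/v$, the claimed value. The anchor $\mu(1,v)=v+1$ is immediate, since an interval graph on at most $v+1$ vertices cannot contain an induced $K_{1,v+1}$, while $K_{1,v+1}$ itself has $v+2$ vertices. Proving the recurrence amounts to two inequalities: an \emph{upper bound on the number of parts} (every interval graph on at most $(v+1)(\mu(k-1,v)+1)$ vertices has a $k$-partition into induced subgraphs of claw number at most $v$), and a \emph{matching construction} (an interval graph on $(v+1)(\mu(k-1,v)+1)+1$ vertices admitting no such $k$-partition). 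I would treat the two directions separately, inducting on $k$.

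For the upper bound I would use divide-and-conquer along the line. Fix an interval representation and choose $v$ cut points $p_1<\cdots<p_v$. Call an interval \emph{crossing} if it contains some $p_j$, and let $X$ be the crossing set. The central observation is that $X$ induces a subgraph of claw number at most $v$: if a crossing interval $I$ were the centre of an induced $K_{1,w}$ inside $X$, its $w$ leaves would be pairwise-disjoint crossing intervals, each containing some cut point, and pairwise-disjoint intervals must contain distinct cut points, so $w\le v$. Thus $X$ can serve as a single part. The remaining intervals fall into the $v+1$ open regions determined by the cut points, and intervals from different regions are non-adjacent (separated by a cut point), so the same palette of $k-1$ colours may be \emph{reused} across all regions; hence it suffices that each region be $(k-1)$-partitionable, which by induction holds once each region has at most $\mu(k-1,v)$ vertices. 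The task therefore reduces to a balancing lemma: choosing the $v$ cut points so that every region is small enough.

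For the construction I would build, by recursion on $k$, an interval graph $T_k$ realizing a complete $(v+1)$-ary tree of height $k$: $T_0$ is a single interval, and $T_k$ consists of a root interval together with $v+1$ disjoint copies of $T_{k-1}$ placed side by side, with the root overlapping one designated vertex of each copy so that the root and these $v+1$ designated vertices form an induced $K_{1,v+1}$. Then $|T_k|=1+(v+1)|T_{k-1}|=((v+1)^{k+1}-1)/v=\mu(k,v)+1$. I would show by induction that $T_k$ admits no $k$-partition: in any partition into parts of claw number at most $v$, the root's colour cannot be shared by all $v+1$ designated vertices, and the recursive structure should force one of the copies to be left with too few usable colours, contradicting the inductive fact that $T_{k-1}$ needs at least $k$ parts.

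I expect two places to be the main obstacles. The first is the balancing lemma in the upper bound: cut points cannot always be placed to make every region small (e.g.\ a graph of many pairwise-disjoint intervals leaves the crossing set nearly empty), so the argument must either exploit that such sparse regions already have small claw-partition number, or carry a stronger inductive measure than raw vertex count, and forcing the accounting to yield \emph{exactly} the factor $v+1$ is delicate. The second is designing the root's overlap pattern in $T_k$ so that the induction genuinely demands a $(k{+}1)$-st colour: the root must interact with each copy strongly enough that no valid $k$-colouring of the copies extends, and it is here that the exact tightness of the bound is decided.
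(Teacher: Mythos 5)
There are genuine gaps in both directions, and the one in your construction is fatal as stated. Your $T_k$ makes the root adjacent to only \emph{one designated vertex} of each copy of $T_{k-1}$; this is far too weak. For $v=1$ your $T_k$ is then a tree (it has $n_k-1$ edges), hence bipartite, hence its vertices split into two independent sets, i.e.\ two cluster graphs --- so $T_k$ admits a $2$-partition for \emph{every} $k$, and never certifies $\mu(k,1)\le 2^{k+1}-2$. (Moreover, for $v\ge 2$ the adjacency pattern you describe is not even realizable by intervals: a root interval meeting one vertex in each of $v+1\ge 3$ disjoint, side-by-side copies must contain the interior copies entirely.) The paper's construction $A_{k+1,v}$ instead joins the new vertex to \emph{all} vertices of every copy, and this is exactly what powers the induction in Lemma~\ref{lem:A}: any part containing the root must avoid some copy \emph{entirely}, since otherwise the root together with one vertex from each copy forms an induced $K_{1,v+1}$; that untouched copy then needs $k$ parts by induction, none of which is the root's part. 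With your weaker adjacency, the copy whose designated vertex is avoided can still share all $k$ colors, and no contradiction follows --- which is why your sketch can only say the structure ``should force'' a $(k{+}1)$-st color without an argument.

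The other direction has the gap you yourself identify but do not close: with $v$ \emph{fixed} cut points the regions cannot always be balanced (with $(v+1)(s+1)$ pairwise disjoint intervals, the crossing set has at most $v$ intervals and some region necessarily receives more than $s$), so induction on raw vertex count breaks. The paper's Lemma~\ref{lem:2r+1} is precisely the missing device, and it works by abandoning fixed cuts: sweep left to right and cut each time \emph{exactly} $s$ intervals lie fully to the left of the cut; the intervals straddling the cut all contain a common unit interval and hence form a clique $Y_i$. Each round consumes at least $s+1$ intervals, so with $n\le(v+1)(s+1)$ there are $r\le v+1$ rounds. If $r\le v$, the union of the $r$ cliques has vertex-clique-partition number at most $r\le v$, hence claw number at most $v$, and serves as one part, while the size-$s$ chunks $X_i$ and the leftover $Z$ are pairwise non-adjacent across and recycle the same $k-1$ colors by induction; if $r=v+1$, the counting forces every $Y_i$ to be a single vertex and $Z=\emptyset$, and $v+1$ vertices trivially have claw number at most $v$. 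So your high-level plan (recurrence, crossing intervals as one low-claw part, color reuse across separated regions) is the right shape, but both of the two pieces you flag as ``obstacles'' are exactly where the proof lives, and neither is supplied.
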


\begin{corollary}\label{cor:kappa}
For $n \ge 1$ and $v \ge 1$,
$\kappa(n,v) = \lfloor\log_{v+1} (n v + 1)\rfloor$.
\end{corollary}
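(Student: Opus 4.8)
The plan is to derive the corollary from Theorem~\ref{thm:mu} by exploiting the fact that $\kappa(n,v)$ and $\mu(k,v)$ are, in a precise sense, inverse functions of one another. First I would record the two monotonicity properties implicit in the definitions. Call a pair $(n,k)$ \emph{good} if every $n$-vertex interval graph admits a $k$-partition into induced subgraphs with claw number at most $v$. Goodness is preserved under decreasing $n$ (pad a smaller interval graph with isolated vertices, $k$-partition the result, and restrict the partition back, noting that an induced subgraph of a graph with claw number at most $v$ still has claw number at most $v$) and under increasing $k$ (adjoin an empty part). Consequently $(n,k)$ is good if and only if $n \le \mu(k,v)$, and therefore $\kappa(n,v)$ is exactly the smallest integer $k$ with $n \le \mu(k,v)$.

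Next I would substitute the closed form from Theorem~\ref{thm:mu} and clear the denominator. The inequality $n \le \mu(k,v) = ((v+1)^{k+1}-(v+1))/v$ is equivalent to $(v+1)^{k+1} \ge nv + v + 1$, so the task reduces to the purely arithmetic claim that the least $k$ satisfying $(v+1)^{k+1} \ge nv+v+1$ equals $\lfloor \log_{v+1}(nv+1)\rfloor$. Writing $k^\ast = \lfloor\log_{v+1}(nv+1)\rfloor$, the definition of the floor gives $(v+1)^{k^\ast} \le nv+1 < (v+1)^{k^\ast+1}$, and since $n \ge 1$ forces $nv+1 \ge v+1$ we have $k^\ast \ge 1$, consistent with $\kappa(n,v)\ge 1$.

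The crux, and the step I expect to be the only genuinely delicate one, is upgrading the strict inequality $(v+1)^{k^\ast+1} > nv+1$ to $(v+1)^{k^\ast+1} \ge nv+v+1$, a gap of $v$ rather than of $1$. Here I would invoke a congruence observation: since $v+1 \equiv 1 \pmod v$, every power satisfies $(v+1)^m \equiv 1 \pmod v$, and $nv+1 \equiv 1 \pmod v$ as well; hence the positive integer $(v+1)^{k^\ast+1} - (nv+1)$ is divisible by $v$ and therefore at least $v$, giving $(v+1)^{k^\ast+1} \ge nv+v+1$ and showing that $k = k^\ast$ satisfies the inequality. For minimality, the left inequality $(v+1)^{k^\ast} \le nv+1 < nv+v+1$ (using $v \ge 1$) shows that $k = k^\ast-1$ fails, so $k^\ast$ is indeed the smallest admissible value. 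Combining the two directions yields $\kappa(n,v) = k^\ast = \lfloor\log_{v+1}(nv+1)\rfloor$.
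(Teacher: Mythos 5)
Your proposal is correct and takes essentially the same route as the paper: both directions amount to inverting Theorem~\ref{thm:mu}, reading off $\kappa(n,v)$ as the least $k$ with $n \le \mu(k,v)$ and then doing the arithmetic. The only cosmetic differences are that you make the padding/monotonicity facts explicit and close the gap between $nv+1$ and $nv+v+1$ with a divisibility-mod-$v$ observation, whereas the paper re-invokes the extremal graphs $A_{k,v}$ of Lemma~\ref{lem:A} for the lower bound and uses the algebraic identity $\mu(\kappa-1,v)+1 = ((v+1)^{\kappa}-1)/v$ for the upper bound.
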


For $k \ge 1$ and $v \ge 1$,
let $\check\mu(k,v)$ be the largest number $w$ such that
any interval graph with claw number at most $w$ admits a vertex partition
into $k$ induced subgraphs with claw number at most $v$.
Correspondingly,
for $w \ge v \ge 1$, let $\check\kappa(w,v)$ be the smallest number $k$ such that
any interval graph with claw number at most $w$ admits a vertex partition
into $k$ induced subgraphs with claw number at most $v$.

Clearly, $\check\mu(k,v) = v$ for $k = 1$ and $v \ge 1$,
and $\check\kappa(w,v) = 1$ for $w = v \ge 1$.
Our next two results include bounds on $\check\mu(k,v)$ for $k \ge 2$ and $v \ge 1$,
and on $\check\kappa(w,v)$ for $w > v \ge 1$:

\begin{theorem}\label{thm:mu'}
For $k \ge 2$ and $v \ge 1$,
$\check\mu(k,v) \le (v+1)^k - 1$.
Also, for $k \ge 2$,
$\check\mu(k,v) \ge (v+1)^{k-1}/2$ when $v = 1$,
$\check\mu(k,v) \ge 2(v+1)^{k-1}/3$ when $v = 2$,
and
$\check\mu(k,v) \ge (v-2)(v+1)^{k-1}$
when $v \ge 3$.
\end{theorem}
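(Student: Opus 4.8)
The plan is to treat the upper bound and the three lower bounds by different techniques: an explicit extremal construction for $\check\mu(k,v)\le(v+1)^k-1$, and an inductive ``peeling'' argument for the lower bounds. Throughout I would measure everything by the claw number $\psi$, and reduce the whole analysis of $\check\mu(k,v)$ to a single step that removes one part while controlling the claw number that remains.

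\emph{Upper bound ($\check\mu(k,v)\le(v+1)^k-1$).} Here I would exhibit one interval graph with claw number exactly $(v+1)^k$ that admits no $k$-partition into induced subgraphs of claw number at most $v$. The natural candidate is the laminar (leveled) interval graph $L_k$ with levels $0,1,\dots,k$, where level $i$ consists of $(v+1)^i$ pairwise disjoint intervals, each spanning a window subdivided into $v+1$ subwindows that carry the $(v+1)^{i+1}$ intervals of level $i+1$. Then $L_k$ has $\sum_{i=0}^{k}(v+1)^i=((v+1)^{k+1}-1)/v=\mu(k,v)+1$ vertices. Its claw number is easy to pin down: the unique level-$0$ interval is adjacent to every other interval, the $(v+1)^k$ level-$k$ intervals are pairwise disjoint, and a largest independent set is obtained by taking exactly those, so $\psi(L_k)=(v+1)^k$. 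Since $L_k$ has $\mu(k,v)+1$ vertices, Theorem~\ref{thm:mu} guarantees it is not $k$-partitionable (and $L_k$ is presumably the extremal graph behind that theorem, so the non-partitionability is the lower-bound direction of Theorem~\ref{thm:mu}; otherwise it is verified for $L_k$ by the same inductive argument). Combining, an interval graph of claw number $(v+1)^k$ need not admit the desired $k$-partition, i.e.\ $\check\mu(k,v)\le(v+1)^k-1$.

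\emph{Lower bounds.} Here I would argue by induction on $k$, the engine being a peeling lemma: from any interval graph $G$ one can delete an induced subgraph $P$ with $\psi(P)\le v$ so that $\psi(G-P)$ drops from $\psi(G)$ by (essentially) a factor of $v+1$. Granting this, starting from claw number $W$ and peeling $k-1$ times leaves a graph whose claw number has fallen to at most $v$, which then serves as the $k$-th part; running the recursion $W\mapsto\lceil W/(v+1)\rceil$ backwards from the base value at the last level yields the stated thresholds $(v-2)(v+1)^{k-1}$ and its small-$v$ analogues. To build $P$ I would sweep the intervals from left to right and select a ``band'' that keeps at most $v$ pairwise disjoint intervals in the neighborhood of any interval (so that $\psi(P)\le v$), while meeting every deep claw in all but a $1/(v+1)$ fraction of its leaves (so that the residual claw number shrinks by the factor $v+1$).

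\emph{Main obstacle.} I expect the peeling lemma to be the crux, and in particular its behaviour for small $v$. For $v\ge 3$ there is enough room to realize the clean factor-$(v+1)$ reduction up to a bounded defect coming from the two ends of the line, which is exactly the source of the additive ``$-2$'' in $(v-2)(v+1)^{k-1}$. For $v=1$ and $v=2$ the clean reduction can fail --- there exist proper interval graphs (claw number $2$) that are not a union of two cluster graphs, so a single deletion cannot in general halve the claw number --- and the peel must be weakened, which is precisely what degrades the coefficient to $1/2$ and $2/3$. Making the band selection precise, certifying $\psi(P)\le v$, and tracking the rounding in the three regimes are the technical heart of the proof; by contrast the upper-bound half is essentially a claw-number computation on $L_k$ together with Theorem~\ref{thm:mu}.
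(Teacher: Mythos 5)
Your upper-bound half coincides with the paper's: your $L_k$ is exactly the graph $A_{k+1,v}$ (a root joined to $v+1$ copies of $A_{k,v}$), with claw number $(v+1)^k$, and its non-$k$-partitionability is the paper's Lemma~\ref{lem:A}. One caveat: the fact that $L_k$ has $\mu(k,v)+1$ vertices does \emph{not} by itself imply that $L_k$ is not $k$-partitionable ($\mu$ only says some graph of that size fails), so the inductive argument you mention parenthetically (the part containing the root must miss an entire copy of $A_{k,v}$) is not optional --- it is the proof. With that repair this half is fine.

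The lower-bound half has a genuine gap: everything rests on a ``peeling lemma'' (delete $P$ with $\psi(P)\le v$ so that the claw number of $G-P$ drops by a factor of $v+1$) that you neither prove nor could prove by the band-selection sketch, and which is not what the paper establishes. The paper performs a \emph{single} peel: run the sweepline algorithm to get cliques $\I_1,\dots,\I_j$ with disjoint representatives $T_i$, let $\J$ be the union of every $s$-th clique with $s=2w$, $\lceil 3w/2\rceil$, $\lceil w/(v-2)\rceil$ in the regimes $v=1,2,\ge 3$, and show $\psi(\J)\le v$ by a case analysis (a claw inside $\J$ would force some interval to meet at least $w+1$ of the disjoint $T_i$, contradicting $\psi(G)\le w$). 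Crucially, what is small after this peel is not the claw number of the remainder but its vertex-clique-partition number: each component of $\I\setminus\J$ has $\vartheta\le s-1\le (v+1)^{k-1}-1$, and the recursion is completed by Lemma~\ref{lem:vartheta}, whose induction reduces $\vartheta$ --- not $\psi$ --- by a factor of $v+1$ per round. This switch of measure is exactly the idea your plan is missing: the one peel that the claw-number hypothesis actually supports reduces $\psi$ only by a factor of about $v-2$ (for $v\ge 3$), and iterating a claw-based peel at that rate never reaches claw number $v$ in $k-1$ rounds; conversely, the clean factor-$(v+1)$ claw-based peel you posit would yield $\check\mu(k,v)\ge v(v+1)^{k-1}$, strictly stronger than the theorem and resolving part of the paper's stated open question, so it cannot simply be assumed. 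Finally, your explanation of why the peel must degrade for small $v$ is based on a false claim: every interval graph with claw number $2$ \emph{is} a union of two cluster graphs, since $\check\kappa(2,1)=2$ (Theorem~\ref{thm:small}, via Albertson et al.). The real source of the weaker coefficients at $v\in\{1,2\}$ is that in the contradiction argument the center alone no longer spans $(v-2)s$ consecutive cliques, and the span must be charged to the union of the center with the extreme leaves, costing a factor of $2$ or $3$.
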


\begin{corollary}\label{cor:kappa'}
For $w > v \ge 1$,
$\lfloor\log_{v+1} w\rfloor + 1
\le \check\kappa(w,v)
\le \lfloor\log_{v+1} w\rfloor + 3$.
Moreover, for $w > v$,
\begin{itemize}\setlength\itemsep{0pt}
\item
$\check\kappa(w,v) \le \lfloor\log_{v+1} 2(w-1)\rfloor + 2
\le \lfloor\log_{v+1} w\rfloor + 3$ when $v = 1$,
\item
$\check\kappa(w,v) \le \lfloor\log_{v+1} 3(w-1)/2\rfloor + 2
\le \lfloor\log_{v+1} w\rfloor + 3$ when $v = 2$,
\item
$\check\kappa(w,v) \le \lfloor\log_{v+1} (w-1)/(v-2)\rfloor + 2
\le \lfloor\log_{v+1} w\rfloor + 2$ when $v \ge 3$.
\end{itemize}
\end{corollary}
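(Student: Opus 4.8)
The plan is to read both inequalities off Theorem~\ref{thm:mu'}, using that $\check\kappa(\cdot,v)$ inverts the non-decreasing function $\check\mu(\cdot,v)$. Straight from the definitions (and monotonicity of $\check\mu$ in $k$), one has $\check\kappa(w,v)\le k$ iff $\check\mu(k,v)\ge w$, so $\check\kappa(w,v)=\min\{k:\check\mu(k,v)\ge w\}$. Because a single part has claw number at most $v<w$, no $1$-partition works, so $\check\kappa(w,v)\ge 2$ and we may freely use the $k\ge 2$ estimates of Theorem~\ref{thm:mu'}.

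For the lower bound, set $k=\check\kappa(w,v)$ and combine $\check\mu(k,v)\ge w$ with $\check\mu(k,v)\le(v+1)^k-1$. Then $w\le(v+1)^k-1<(v+1)^k$, so $\log_{v+1}w<k$, and since $k$ is an integer, $k\ge\lfloor\log_{v+1}w\rfloor+1$.

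For the upper bounds, plug each lower estimate on $\check\mu(k,v)$ into $\check\mu(k,v)\ge w$ and exhibit a feasible $k$. Consider $v\ge 3$, where $\check\mu(k,v)\ge(v-2)(v+1)^{k-1}$, and take $k=\lfloor\log_{v+1}((w-1)/(v-2))\rfloor+2$. With $x=\log_{v+1}((w-1)/(v-2))$, the elementary bound $\lfloor x\rfloor+1>x$ gives $(v+1)^{k-1}=(v+1)^{\lfloor x\rfloor+1}>(v+1)^x=(w-1)/(v-2)$, hence $(v-2)(v+1)^{k-1}>w-1$; as the left side is an integer this sharpens to $\ge w$, so $\check\mu(k,v)\ge w$ and $\check\kappa(w,v)\le k$. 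The cases $v=1,2$ are identical, starting from $\check\mu(k,1)\ge 2^{k-2}$ and $\check\mu(k,2)\ge 2\cdot 3^{k-2}$ and using $\lfloor\log_2 2(w-1)\rfloor=\lfloor\log_2(w-1)\rfloor+1$ and $\lfloor\log_3 3(w-1)/2\rfloor=\lfloor\log_3((w-1)/2)\rfloor+1$; the same ``strict floor bound, then round an integer up'' step turns each $>w-1$ into $\ge w$.

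The uniform upper bound $\check\kappa(w,v)\le\lfloor\log_{v+1}w\rfloor+3$ then follows by bounding each refined estimate: for $v\ge 3$ one checks $(w-1)/(v-2)\le w$ (i.e.\ $3-1/w\le v$), so that estimate is even $\le\lfloor\log_{v+1}w\rfloor+2$; for $v=1,2$ the rewritings above display it as $\lfloor\log_{v+1}w'\rfloor+3$ with $w'=w-1$ and $w'=(w-1)/2$, both $\le w$. I expect the one fiddly point to be exactly this floor/ceiling bookkeeping: since the estimates of Theorem~\ref{thm:mu'} carry the stray factors $1/2$, $2/3$, and $v-2$ rather than being clean powers of $v+1$, one must carry these through the logarithm and then lean on integrality to absorb an off-by-one, which is what forces $w-1$ in place of $w$ inside the logarithms.
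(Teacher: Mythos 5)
Your proposal is correct and takes essentially the same route as the paper: both directions are read off Theorem~\ref{thm:mu'} by inverting the relation between $\check\kappa$ and $\check\mu$, with the same logarithm/floor/integrality bookkeeping yielding the $w-1$ terms. The only cosmetic difference is directional: the paper obtains the upper bounds by setting $\check\kappa=\check\kappa(w,v)$ and solving $w\ge\check\mu(\check\kappa-1,v)+1$ for $\check\kappa$, and gets the lower bound directly from $A_{k,v}$ via Lemma~\ref{lem:A}, whereas you exhibit a feasible $k$ and invoke the stated bound $\check\mu(k,v)\le(v+1)^k-1$ (itself proved from Lemma~\ref{lem:A}), which are interchangeable manipulations.
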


In the terminology of partially ordered sets~\cite{Fi85},
$\kappa(n,v)$ is the smallest number $k$ such that
any interval order with $n$ elements admits a partition into $k$
weak orders (respectively, semiorders)
when $v=1$ (respectively, $v=2$).
Similarly, $\check\kappa(w,v)$ for $v=1$ and $v=2$ has
alternative interpretations in terms of weak orders and semiorders
in interval orders.

Albertson et~al.~\cite[Theorem~4]{AJHL89} proved that
$\check\kappa(w,1) \le w$ for any $w \ge 1$.
Although this upper bound is asymptotically weaker than the $v=1$ case of
our upper bound in Corollary~\ref{cor:kappa'},
it is tight for $w = 2$ and $3$, as we show in the next theorem,
where the exact values of $\check\kappa(w,v)$ are determined for some small values of $w$ and $v$:

\begin{theorem}\label{thm:small}
$\check\kappa(2,1) = 2$, $\check\kappa(3,1) = 3$,
\quad
$\check\kappa(3,2) = 2$,
$\check\kappa(4,2) = \check\kappa(5,2) = \check\kappa(6,2) = 3$,
\quad
$\check\kappa(5,3) = 2$.
\end{theorem}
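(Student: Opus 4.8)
The plan is to treat the seven equalities in three groups, isolating the genuinely new content. Many of the inequalities are already available. For every entry claimed to equal $2$---namely $\check\kappa(2,1)$, $\check\kappa(3,2)$, and $\check\kappa(5,3)$---the matching lower bound $\ge 2$ is the instance $k=\lfloor\log_{v+1}w\rfloor+1$ of Corollary~\ref{cor:kappa'}. The upper bounds $\check\kappa(2,1)\le 2$ and $\check\kappa(3,1)\le 3$ are the cases $w=2,3$ of Albertson et al.'s bound $\check\kappa(w,1)\le w$~\cite{AJHL89}, and the upper bounds $\check\kappa(4,2),\check\kappa(5,2),\check\kappa(6,2)\le 3$ are instances of Corollary~\ref{cor:kappa'}. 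Since $\check\kappa(\cdot,v)$ is nondecreasing in its first argument, the three $v=2$ lower bounds all reduce to the single bound $\check\kappa(4,2)\ge 3$, and the three $v=2$ upper bounds all reduce to $\check\kappa(6,2)\le 3$. This leaves exactly two new upper bounds, $\check\kappa(3,2)\le 2$ and $\check\kappa(5,3)\le 2$, and four lower bounds, $\check\kappa(3,1)\ge 3$ and $\check\kappa(4,2)\ge 3$, to prove directly.

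The two upper bounds are the cases $w=2v-1$ of the heuristic that a claw of size $2v-1$ can be split into two claws of size at most $v$. I would fix an interval representation and observe that the leaves of any induced claw centered at an interval $I$ form a chain of pairwise-disjoint intervals all crossing $I$, of length at most the global claw number $2v-1$; consequently it suffices to produce a $2$-coloring of the intervals under which every such crossing chain contains at most $v$ intervals of each color, since then no color class contains an induced $K_{1,v+1}$. To obtain such a coloring I would process the intervals in a left-to-right sweep (by increasing right endpoint) and assign to each interval whichever color does not complete a monochromatic claw of size $v+1$ ending at it, maintaining as an invariant that along every crossing chain seen so far the two colors stay balanced. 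The hypothesis $2v-1<2v$ is what I expect to make this work: a chain already carrying $v$ intervals of one color has at most $v-1$ of the other, so the opposite color remains available.

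For the lower bounds I would exhibit explicit interval representations. It is convenient to view a $2$-partition into claw-$\le v$ subgraphs as a $2$-coloring in which no induced $K_{1,v+1}$ is monochromatic; the induced copies of $K_{1,v+1}$ then form a system of constraints, and non-$2$-colorability is exactly the failure of property~$B$ for this system. I would design small interval graphs whose induced claws overlap in enough of their vertices that every $2$-coloring leaves some claw monochromatic, while arranging the endpoints so that no interval is crossed by more than $w$ pairwise-disjoint intervals, thereby pinning the claw number at the prescribed value $w\in\{3,4\}$. Correctness is then a finite check over the essentially different colorings, which I would cut down by first fixing the color of a high-degree central interval and propagating the forced choices.

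The main obstacle is the global consistency of the even-splitting coloring for the two upper bounds. Splitting one crossing chain evenly is trivial, but a single interval lies on many crossing chains at once, and the color that balances one chain may overload another; the crux is to show that the sweep never gets stuck, i.e.\ that at each step at least one color simultaneously avoids completing a size-$(v+1)$ claw on every chain through the current interval. I expect to convert the slack coming from $w=2v-1<2v$ into the precise invariant driving this induction. The dual difficulty is on the lower-bound side: the generic constructions underlying Theorem~\ref{thm:mu'} have much larger claw number, so they must be replaced by tailored, tightly packed representations that defeat every $2$-coloring while keeping the claw number as small as $3$ (for $v=1$) and $4$ (for $v=2$).
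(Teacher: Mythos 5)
Your reductions are correct and match the paper's: Albertson et al.~\cite{AJHL89} gives $\check\kappa(2,1)\le 2$ and $\check\kappa(3,1)\le 3$, Corollary~\ref{cor:kappa'} gives the trivial lower bounds of $2$ and the upper bound $\check\kappa(6,2)\le 3$, and monotonicity of $\check\kappa(\cdot,v)$ reduces everything to four claims: $\check\kappa(3,2)\le 2$, $\check\kappa(5,3)\le 2$, $\check\kappa(3,1)\ge 3$, and $\check\kappa(4,2)\ge 3$. But these four claims are the entire content of the theorem, and none of them is actually proved in your proposal. For the two upper bounds, your greedy left-to-right sweep (``assign whichever color does not complete a monochromatic claw'') hinges exactly on the step you yourself flag as unresolved: when an interval is processed, one color may complete a claw on one chain while the other color completes a claw on another chain, and you exhibit no invariant that prevents this; the slack $2v-1<2v$ is a statement about a single chain and does not by itself control the interaction between chains. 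The paper does not color greedily at all. It uses global partitions defined in advance from the interval structure: for $\check\kappa(3,2)\le 2$ (Lemma~\ref{lem:w3v2}) it takes the sweepline clique partition $\I'_1,\ldots,\I'_j$ and groups cliques with index $\equiv 0,1\pmod 4$ into one part and $\equiv 2,3\pmod 4$ into the other, so that any monochromatic $K_{1,3}$ forces its center or an extreme leaf to meet four pairwise disjoint intervals $T_i$, contradicting claw number $3$; for $\check\kappa(5,3)\le 2$ (Lemma~\ref{lem:w5v3}) it splits $\I$ by whether an interval properly contains at least two disjoint intervals, so that any monochromatic $K_{1,4}$ forces its center to meet six disjoint intervals. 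Note that the paper handles the two cases by two different partitions; there is no unified ``$w=2v-1$'' argument, so your hoped-for general sweep would itself be a new result requiring proof.

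The lower-bound half has the same problem in a starker form: you describe the properties the counterexamples should have (overlapping claws defeating every $2$-coloring, claw number pinned at $3$ or $4$) but construct nothing, and the constructions are precisely the hard part. The paper's witnesses are specific dense families: for $\check\kappa(3,1)\ge 3$, the family $\J_3$ of all $12$ intervals of lengths $1,2,3$ with integer endpoints in $[0,5]$, killed by a three-case hand analysis (Lemma~\ref{lem:w3v1}); for $\check\kappa(4,2)\ge 3$, the family $\J_4$ of all $105$ intervals of integer lengths $2$ through $7$ with integer endpoints in $[0,21]$, whose non-$2$-colorability the authors could verify only by a computer branch-and-bound search (Lemma~\ref{lem:w4v2}). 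Saying that correctness ``is then a finite check'' is not a proof when the object to be checked has not been produced; indeed, for $w=4$ even the paper's authors found no human-readable argument, which is a strong sign that the propagation argument you sketch would not terminate in a short case analysis.
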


For $v \ge 1$,
let \textsc{Min-Partition$(v)$} be the problem of
finding a vertex partition of a given graph
into the minimum number of induced subgraphs with claw number at most $v$.
For $k \ge 2$ and $v \ge 1$,
let \textsc{$k$-Partition$(v)$} be the problem of
deciding whether a given graph admits a vertex partition
into $k$ induced subgraphs with claw number at most $v$.
By certain generic results on vertex-partitioning~\cite{Ac97,Fa04},
\textsc{$k$-Partition$(v)$} in general graphs
is NP-hard for all $k \ge 2$ and $v \ge 1$.

Broersma et~al.~\cite[Theorem~5.4]{BFNW02} proved that for any fixed $k$,
deciding whether an interval graph with $n$ vertices has subchromatic number at most $k$
admits an algorithm running in $O(k\cdot n^{2 k+1})$ time.
In other words,
they presented an $O(k\cdot n^{2 k+1})$-time algorithm for
\textsc{$k$-Partition$(1)$} in interval graphs.

Gandhi et~al.~\cite{GGPR10} noted that
the two results~\cite[Lemma~2.4 and Theorem~5.4]{BFNW02} together imply an
$n^{O(\log n)}$-time exact algorithm for subcoloring interval graphs,
and hence this problem is unlikely to be NP-hard.
Nevertheless,
they presented a $3$-approximation algorithm for
subcoloring interval graphs.
In addition,
they presented a $6$-approximation algorithm for partitioning an interval graph into
the minimum number of proper interval graphs.
In other words, they obtained
a $3$-approximation for \textsc{Min-Partition$(1)$},
and a $6$-approximation for \textsc{Min-Partition$(2)$}, in interval graphs.

We present a simple approximation algorithm for \textsc{Min-Partition$(v)$}
in interval graphs for all $v \ge 1$:

\begin{theorem}\label{thm:approx}
\textsc{Min-Partition$(v)$} in interval graphs
admits a polynomial-time approximation algorithm
with ratio $3$ for $1 \le v \le 2$,
and with ratio $2$ for $v \ge 3$.
\end{theorem}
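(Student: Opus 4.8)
The plan is to exhibit a computable per-instance lower bound on the optimum and a constructive partition whose size is within the stated constant factor of that lower bound. The first thing to settle is the right lower-bound parameter. The claw number itself is useless here: a single wide star $K_{1,w}$ has claw number $w$ yet needs only two claw-$\le v$ parts (put the center with $v$ of its leaves, and all remaining leaves, being pairwise non-adjacent, in one more part). What actually forces many parts is \emph{nested} claws. Working in a fixed interval representation $\F$ (where claw number $\le v$ means no interval meets $v+1$ pairwise-disjoint intervals), I would define a nesting depth $d(G)$ recursively: $d(G)\ge t$ if there is an interval $I_0\in\F$ properly containing $v+1$ pairwise-disjoint intervals $I_1,\dots,I_{v+1}$ such that, for every $j$, the subgraph induced by the intervals nested inside $I_j$ already has depth $\ge t-1$.

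First I would prove $\mathrm{OPT}(G)\ge d(G)+1$ by induction on $t=d(G)$. Fix any partition into claw-$\le v$ parts and take the witnessing $I_0,I_1,\dots,I_{v+1}$. The part $P$ containing $I_0$ cannot meet all $v+1$ of the nested regions: picking one interval of $P$ from each of $v+1$ distinct regions would give $v+1$ pairwise-disjoint neighbors of $I_0$ inside $P$, hence an induced $K_{1,v+1}$. So some region $I_{j^*}$ is entirely disjoint from $P$, and its sub-structure (of depth $\ge t-1$) is partitioned by the remaining parts; induction gives at least $t$ of them, so $\mathrm{OPT}\ge t+1$. This step is clean and is where the recursive structure pays off.

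The harder half is the algorithm: showing that $G$ can be partitioned, in polynomial time, into at most $c\,(d(G)+1)$ claw-$\le v$ subgraphs with $c=3$ for $1\le v\le 2$ and $c=2$ for $v\ge 3$; combined with the lower bound this yields the claimed ratio. The idea is to peel the graph one nesting level at a time, spending a constant number of classes per level and reusing the same palette across disjoint regions, so that the total is governed by the depth $d$ rather than by $n$. The crux --- and the step I expect to be the main obstacle --- is the base case: bounding by $c$ the number of claw-$\le v$ classes needed to clear a single (flat) level, while guaranteeing that the intervals ``hanging across'' the boundary to the next level cannot combine with a class to create a fresh $K_{1,v+1}$. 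This is exactly where the dichotomy $v\le 2$ versus $v\ge 3$ enters: a single claw-$\le v$ class can absorb a center together with up to $v$ pairwise-disjoint children, so for $v\ge 3$ two classes per level suffice, whereas the tighter packing room for $v\le 2$ forces a third. Two secondary technicalities must also be handled: the interval-containment relation is not quite a forest (incomparable intervals may share a nested interval), which I would resolve by passing to a laminar refinement or by sweeping along the clique path of $G$; and $d(G)$ must be evaluated in polynomial time, which follows from a straightforward dynamic program over the nesting structure. The constructive per-level bound can be taken from the arguments already used for Theorem~\ref{thm:mu'} and Corollary~\ref{cor:kappa'}, specialized to the single-level (depth-one) case.
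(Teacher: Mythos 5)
Your lower-bound half is sound, and it is essentially the paper's: your ``nesting depth'' witness is exactly the recursive structure $A_{k,v}$ of Lemma~\ref{lem:A}, and your induction (the part containing the center must miss one nested region entirely) is the same argument. But the theorem's real content is the algorithmic half, and there your proposal stops at a plan whose crux you yourself flag as ``the main obstacle.'' The idea you are missing is the paper's peeling criterion: at each round, remove exactly those intervals that do \emph{not} properly contain $v+1$ pairwise disjoint intervals among the intervals still remaining. Since any interval intersecting $v+3$ pairwise disjoint intervals must properly contain at least $v+1$ of them, every peeled level automatically induces a subgraph of claw number at most $v+2$. This single observation dissolves both of your ``secondary technicalities'' (no laminar refinement is needed, and there is no ``hanging across the boundary'' problem, since the levels partition the vertex set and each level is handled on its own), and it reduces the per-level question to the clean quantity $\check\kappa(v+2,v)$. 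It also makes the levels align with the lower bound: every interval in level $i\ge 2$ properly contains $v+1$ disjoint intervals in level $i-1$, so $k$ levels yield an induced $A_{k,v}$ and hence $\mathrm{OPT}\ge k$.

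Moreover, your proposed way to finish --- take the per-level bound ``from the arguments already used for Theorem~\ref{thm:mu'} and Corollary~\ref{cor:kappa'}'' --- does not give the claimed constants at the two endpoint cases, so the gap is not merely one of detail. For $v=1$, a level has claw number at most $3$, and Corollary~\ref{cor:kappa'} only gives $\check\kappa(3,1)\le\lfloor\log_2 4\rfloor+2=4$; the paper instead invokes the result of Albertson et~al.~\cite{AJHL89} that $\check\kappa(w,1)\le w$, giving $3$. For $v=3$, a level has claw number at most $5$, and Corollary~\ref{cor:kappa'} only gives $\check\kappa(5,3)\le\lfloor\log_4 4\rfloor+2=3$ (equivalently, Theorem~\ref{thm:mu'} gives $\check\mu(2,3)\ge(v-2)(v+1)=4<5$); the ratio-$2$ claim for $v=3$ requires the separate containment-based partition of Lemma~\ref{lem:w5v3}, which shows $\check\kappa(5,3)\le 2$, and your heuristic that ``a single class can absorb a center with $v$ children'' is not a substitute for it. So, as written, your plan would prove ratio $4$ for $v=1$ and ratio $3$ for $v=3$, not the claimed $3$ and $2$; only for $v=2$ and $v\ge 4$ does your deferral to Corollary~\ref{cor:kappa'} actually produce the constants packaged in the paper's Proposition~\ref{prp:v+2}.
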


\section{Preliminaries}

Interval graphs can be recognized in linear time~\cite{Go04}.
Henceforth when we refer to an interval graph,
we assume that an interval representation of the graph is readily available when needed,
and we refer to an interval graph and its interval representation
interchangeably.

An \emph{independent set} (respectively, a \emph{clique})
is a set of pairwise non-adjacent (respectively, adjacent)
vertices in a graph.
Besides the claw number $\psi(G)$ introduced earlier,
there are two other common parameters for a graph $G$:
\begin{itemize}\setlength\itemsep{0pt}

\item
the \emph{independence number}
$\alpha(G)$ is the maximum number of vertices in an independent set
in $G$,

\item
the \emph{vertex-clique-partition number}
$\vartheta(G)$ is the minimum number of parts
in a vertex partition of $G$ into cliques.

\end{itemize}

It is easy to see that
$\psi(G) \le \alpha(G) \le \vartheta(G)$ for any graph $G$.
Indeed, if $G$ is an interval graph, then
$\alpha(G) = \vartheta(G)$
\cite{Go04}.
We next sketch a simple constructive proof of the equality
$\alpha(G) = \vartheta(G)$ for an interval graph $G$.
Let $\I$ be a family of open intervals whose intersection graph is $G$.
Without loss of generality,
assume that all endpoints of intervals in $\I$ are integers.
Then we can find an independent set and a vertex clique partition at the same
time by a standard \emph{sweepline algorithm} as follows:
\begin{quote}
Initialize $\I' \gets \I$ and $i \gets 1$.
While $\I'$ is not empty,
let $T_i = (l_i, r_i)$ be an interval in $\I'$ whose right endpoint $r_i$ is
minimum,
let $\I'_i$ (respectively, $\I_i$) be the subfamily of intervals in
$\I'$ (respectively, $\I$) that contain
the subinterval $S_i = (r_i - 1, r_i)$ of $T_i$,
then update $\I' \gets \I' \setminus \I'_i$ and $i \gets i + 1$.
\end{quote}

Let $j$ be the number of rounds that
the sweepline algorithm runs until $\I'$ is empty.
Then for $1 \le i \le j$,
$S_i \subseteq T_i \in \I'_i \subseteq \I_i$.
Moreover,
the $j$ intervals $T_i$ correspond to an independent set in $G$,
and
the $j$ subfamilies $\I'_i$ correspond to a partition of the vertices of $G$
into cliques.
Thus $\alpha(G) = \vartheta(G)$.

\bigskip
For a graph $G$ and $v \ge 1$,
let $\kappa(G,v)$ be the smallest number of induced subgraphs
in a vertex partition of $G$ such that
each subgraph has claw number at most $v$.

\begin{lemma}\label{lem:vartheta}
For $k \ge 1$ and $v \ge 1$,
any interval graph $G$ with $\vartheta(G) \le (v+1)^k - 1$
satisfies $\kappa(G,v) \le k$.
\end{lemma}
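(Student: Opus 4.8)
The plan is to prove the lemma by induction on $k$, peeling off one induced subgraph of small independence number at each step using the sweepline clique partition from the preliminaries. For the base case $k=1$, the hypothesis $\vartheta(G)\le (v+1)-1=v$ together with $\psi(G)\le\alpha(G)=\vartheta(G)$ gives $\psi(G)\le v$, so $\kappa(G,v)\le 1$. For the inductive step with $k\ge 2$, write $t=\alpha(G)=\vartheta(G)\le (v+1)^k-1$ and set $m=(v+1)^{k-1}$. Running the sweepline produces points $S_1,\dots,S_t$ with strictly increasing right endpoints $r_1<\cdots<r_t$, and each interval $I$ contains exactly the points $S_i$ for $i$ in a contiguous range $[a(I),b(I)]$. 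The goal is to extract one induced subgraph $H$ with $\psi(H)\le v$ so that $G-V(H)$ breaks into pieces each satisfying the hypothesis for $k-1$.

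The subgraph $H$ I would remove is the set of all intervals containing at least one of the $v$ \emph{separator points} $S_m,S_{2m},\dots,S_{vm}$. That $\psi(H)\le v$ is a one-line pigeonhole argument: any independent set in $H$ consists of pairwise disjoint intervals, and two disjoint intervals cannot both contain the same point $S_{jm}$; since there are only $v$ separator points, $\alpha(H)\le v$, whence $\psi(H)\le\alpha(H)\le v$. After deleting $H$, every surviving interval avoids all separator points, so its contiguous range lies in a single \emph{gap} $[jm+1,(j+1)m-1]$; this splits $G-V(H)$ into groups $G_0,\dots,G_v$. Each gap has only $m-1$ point indices and disjoint intervals occupy disjoint points, so $\alpha(G_j)\le m-1$, giving $\vartheta(G_j)=\alpha(G_j)\le (v+1)^{k-1}-1$, and the induction hypothesis partitions each $G_j$ into at most $k-1$ induced subgraphs of claw number at most $v$.

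The crux, and the step I expect to require the most care, is showing that distinct groups are pairwise non-adjacent in $G-V(H)$; only then may the individual colorings be superimposed with a common palette of $k-1$ colors, since a $K_{1,v+1}$ inside a merged color class would have all its leaves adjacent to its center and hence confined to one group, contradicting the hypothesis there. Adding one further color for $H$ then yields $\kappa(G,v)\le k$. To prove non-adjacency I would take $I\in G_j$ and $I'\in G_{j'}$ with $j<j'$ and examine the separator index $P=(j+1)m$, using the characterization that $I$ contains $S_i$ exactly when $l_I<r_i\le r_I$ together with the strict monotonicity $r_1<\cdots<r_t$ (which holds since every interval with right endpoint $r_i$ contains $S_i$ and is removed in round $i$). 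Every point of $I$ has index at most $P-1$ and $I$ misses $S_P$, forcing $r_I<r_P$; symmetrically, every point of $I'$ has index at least $P+1$ and $I'$ misses $S_P$, forcing $l_{I'}\ge r_P$. Thus $r_I<r_P\le l_{I'}$, so $I$ and $I'$ are disjoint, completing the induction.
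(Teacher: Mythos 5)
Your proof is correct and takes essentially the same approach as the paper: your set $H$ of intervals containing a separator point $S_{jm}$ is exactly the paper's $\J$ (the union of the sweepline cliques $\I_i$ with $i \bmod (v+1)^{k-1} = 0$), and your gap groups $G_0,\dots,G_v$ play the role of the paper's connected components of $\I\setminus\J$, each with clique-cover number at most $(v+1)^{k-1}-1$, handled by induction and disjoint union. The only difference is presentational: you verify explicitly (via $\psi\le\alpha$, strict monotonicity of the $r_i$, and the pairwise non-adjacency of the groups) details that the paper bounds via $\psi\le\vartheta$ and leaves as easily verified.
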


\begin{proof}
Fix any $v \ge 1$.
We prove the lemma by induction on $k$.
For the base case when $k = 1$,
any interval graph $G$ with $\vartheta(G) \le (v+1)^1 - 1 = v$
has $\psi(G) \le \vartheta(G) \le v$ and hence $\kappa(G,v) \le 1$.
Now let $k \ge 2$,
and let $G$ be an interval graph with $\vartheta(G) \le (v+1)^k - 1$.
We next show that $G$ admits a vertex partition into $k$ induced subgraphs
with claw number at most $v$.

Run the sweepline algorithm on an interval representation $\I$ of $G$
to obtain $j = \vartheta(G)$ cliques $\I_i$,
$1 \le i \le j$,
such that $\I = \cup_i\, \I_i$.
Let $t = (v+1)^k - 1$ and $s = (v+1)^{k-1} - 1$.
Then $j \le t = (v+1)(s+1) - 1 = v(s+1) + s$.
Let $\J$ be the union of all subfamilies $\I_i$ with
$i \bmod (s + 1) = 0$.
The subgraph of $G$ represented by $\J$
has vertex-clique-partition number at most $v$,
and hence claw number at most $v$ too.

Each connected component in the subgraph of $G$ represented by $\I \setminus \J$
has vertex-clique-partition number at most $s = (v+1)^{k-1} - 1$,
and hence admits a vertex partition into $k-1$ induced subgraphs
with claw number at most $v$, by the induction hypothesis.
Then by disjoint union,
the subgraph of $G$ represented by $\I \setminus \J$
also admits a vertex partition into $k-1$ induced subgraphs
with claw number at most $v$.
Thus $\kappa(G,v) \le 1 + (k - 1) = k$.
\end{proof}

To prove that their upper bound on subchromatic numbers of interval graphs
is best possible,
Broersma et~al.~\cite[Lemma~2.4]{BFNW02} showed that for any $k \ge 1$,
there is an interval graph $G_k$ with $2^k - 1$ vertices
and subchromatic number $k$.
Such graphs were later used by Gandhi et~al.~\cite{GGPR10} as a lower bound in
their $3$-approximation algorithm for subcoloring interval graphs;
they refer to $G_k$ as $BC(k)$ and call them binary cliques.
Similarly,
Gardi~\cite[Lemma~2.1]{Ga11} showed that for any $k \ge 1$,
there is an interval graph $H_k$ with $(3^k - 1) / 2$ vertices
that admits no vertex partition into less than $k$ proper interval graphs.

$G_k$ and $H_k$ for $k \ge 1$ are recursively constructed as follows:
$G_1$ (respectively, $H_1$) is just a single vertex.
For $k \ge 2$, $G_k$ (respectively, $H_k$) consists of a new vertex
connected to all vertices of $2$ (respectively, $3$) disjoint copies of
$G_{k-1}$ (respectively, $H_{k-1}$).

$G_k$ and $H_k$ for $k \ge 1$ can be generalized to
$A_{k,v}$ for $k \ge 1$ and $v \ge 1$:
$A_{1,v}$ is just a single vertex.
For $k \ge 2$, $A_{k,v}$ consists of a new vertex
connected to all vertices of $v+1$ disjoint copies of $A_{k-1,v}$.
Then $G_k = A_{k,1}$
and $H_k = A_{k,2}$.
Refer to Figure~\ref{fig:h3} for an illustration of $H_3 = A_{3,2}$.

\begin{figure}[htbp]
\centering\includegraphics{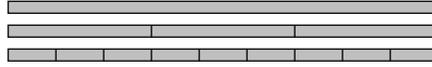}
\caption{An interval representation of $H_3 = A_{3,2}$.}
\label{fig:h3}
\end{figure}

It is easy to see that for $k \ge 1$ and $v \ge 1$,
$A_{k,v}$ is an interval graph with
$\sum_{i=0}^{k-1} (v+1)^i = ((v+1)^k - 1)/v$ vertices,
and
$\alpha(A_{k,v}) = \vartheta(A_{k,v}) = (v+1)^{k-1}$.
Also, for $k \ge 2$ and $v \ge 1$,
$\psi(A_{k,v}) = (v+1)^{k-1}$.

\begin{lemma}\label{lem:A}
For $k \ge 2$ and $v \ge 1$,
$\kappa(A_{k,v},v) = k$.
\end{lemma}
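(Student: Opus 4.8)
The plan is to prove the two inequalities $\kappa(A_{k,v},v) \le k$ and $\kappa(A_{k,v},v) \ge k$ separately. The upper bound should follow immediately from the machinery already in place: since $\vartheta(A_{k,v}) = (v+1)^{k-1} \le (v+1)^k - 1$ for all $k \ge 2$ and $v \ge 1$, Lemma~\ref{lem:vartheta} gives $\kappa(A_{k,v},v) \le k$ directly. So the real content is the lower bound $\kappa(A_{k,v},v) \ge k$, which I would establish by induction on $k$.

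For the lower bound, I would argue by induction on $k$, proving that $A_{k,v}$ admits no vertex partition into fewer than $k$ induced subgraphs of claw number at most $v$. The base case $k = 2$ is handled by noting that $\psi(A_{2,v}) = (v+1)^{2-1} = v+1 > v$, so a single part cannot have claw number at most $v$, forcing at least $2$ parts. For the inductive step, suppose toward a contradiction that $A_{k,v}$ has a valid partition into at most $k-1$ parts. The key structural observation is that $A_{k,v}$ has a universal (apex) vertex $u$ joined to all vertices of $v+1$ disjoint copies of $A_{k-1,v}$. Whichever part $P$ contains $u$, I would argue that $P$ can intersect only a limited number of the copies without creating an induced $K_{1,v+1}$ centered at $u$: if $u$ together with the part $P$ met $v+1$ different copies, picking one vertex from each of the $v+1$ copies that lies in $P$ would yield an independent set of size $v+1$ all adjacent to $u$ (since vertices in distinct copies are non-adjacent), producing an induced claw $K_{1,v+1}$ in $P$, contradicting $\psi(P) \le v$. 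Hence $P$ meets at most $v$ of the $v+1$ copies, leaving at least one copy of $A_{k-1,v}$ entirely untouched by $P$.

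That untouched copy is an induced subgraph isomorphic to $A_{k-1,v}$, and its vertices are partitioned by the remaining at most $k-2$ parts (the original $k-1$ parts minus $P$). This would give a partition of $A_{k-1,v}$ into at most $k-2$ induced subgraphs of claw number at most $v$, contradicting the induction hypothesis $\kappa(A_{k-1,v},v) \ge k-1$. This completes the induction and yields $\kappa(A_{k,v},v) \ge k$, which combined with the upper bound gives equality.

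The main obstacle I anticipate is making the claw-counting argument fully rigorous, specifically verifying that the chosen representatives from distinct copies are genuinely pairwise non-adjacent and that each is adjacent to the apex $u$. Both facts follow from the recursive definition of $A_{k,v}$ (the new vertex is joined to all copy-vertices, and vertices in different copies share no edge), so the induced $K_{1,v+1}$ is clean; the only care needed is to confirm that one actually can pick such a vertex from each of the $v+1$ copies whenever $P$ meets all of them, and that the argument correctly isolates a full untouched copy rather than merely a large portion of one. I would also double-check the edge case where $A_{k-1,v}$ degenerates (for $k = 2$, $A_{1,v}$ is a single vertex), ensuring the base case of the induction is stated to match.
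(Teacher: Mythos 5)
Your proposal is correct and follows essentially the same route as the paper: the upper bound via $\vartheta(A_{k,v}) = (v+1)^{k-1} \le (v+1)^k - 1$ and Lemma~\ref{lem:vartheta}, and the lower bound by induction on $k$, observing that the part containing the apex vertex must miss one copy of $A_{k-1,v}$ entirely (else an induced $K_{1,v+1}$ forms around the apex), then applying the induction hypothesis to that copy. The only cosmetic difference is that you phrase the inductive step as a contradiction with at most $k-1$ parts, while the paper argues directly that any valid partition needs at least $1+(k-1)=k$ parts.
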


\begin{proof}
Fix any $v \ge 1$.
Since
$\vartheta(A_{k,v}) = (v+1)^{k-1} \le (v+1)^k - 1$,
it follows by Lemma~\ref{lem:vartheta} that
$\kappa(A_{k,v},v) \le k$.
We next show that
$\kappa(A_{k,v},v) \ge k$ by induction on $k$.
For the base case when $k = 2$,
$A_{2,v}$ is simply $K_{1,v+1}$, and hence admits no vertex partition
into less than $2$ induced subgraphs with claw number at most~$v$.
Now proceed to the inductive step when $k \ge 3$.

Consider any vertex partition of $A_{k,v}$ into
induced subgraphs with claw number at most $v$.
Recall that $A_{k,v}$ consists of a new vertex
connected to $v+1$ disjoint copies of $A_{k-1,v}$.
To avoid a star $K_{1,v+1}$ forming around the new vertex,
the subgraph that includes it can include
vertices from at most $v$ copies of $A_{k-1,v}$,
and hence must miss one copy of $A_{k-1,v}$ entirely.
By the induction hypothesis,
this copy of $A_{k-1,v}$ admits no vertex partition
into less than $k-1$ induced subgraphs with claw number at most $v$.
It follows that $A_{k,v}$ admits no vertex partition
into less than $k$ induced subgraphs with claw number at most $v$.
Thus $\kappa(A_{k,v},v) \ge k$.
\end{proof}

Note that for $k \ge 1$ and $v \ge 1$,
the interval graph $A_{k+1,v}$ satisfies
$\vartheta(A_{k+1,v}) = (v+1)^k$
and
$\kappa(A_{k+1,v},v) = k+1$.
This shows that the bound in Lemma~\ref{lem:vartheta} is best possible.

\section{$\mu(k,v)$ and $\kappa(n,v)$}

In this section we prove Theorem~\ref{thm:mu} and
Corollary~\ref{cor:kappa}.

For a family $\I$ of intervals, denote by $|\I|$ the number of intervals in $\I$.
We first prove a technical lemma:

\begin{lemma}\label{lem:2r+1}
For $n \ge s \ge 1$,
the vertex set of any interval graph $G$ with $n$ vertices
can be partitioned into $2r+1$ subsets
for some $r \ge 0$,
including $X_i$ and $Y_i$ for $1 \le i \le r$, and $Z$,
such that
\begin{enumerate}\setlength\itemsep{0pt}

\item
Vertices from different subsets among $X_i$ and $Z$ are non-adjacent in $G$.

\item
Vertices in each subset $Y_i$ are pairwise adjacent in $G$.

\item
$|Z| \le |X_i| = s < |X_i| + |Y_i|$.

\end{enumerate}
\end{lemma}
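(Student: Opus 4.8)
The plan is to build the partition greedily by sweeping the interval representation from left to right, repeatedly carving off a block of exactly $s$ intervals (an $X_i$) immediately followed by a clique that separates it from everything to its right (a $Y_i$), and placing whatever survives at the end into $Z$. Throughout I would assume, without loss of generality, that the $2n$ endpoints of the intervals are pairwise distinct (perturbing them does not change $G$). Order the intervals by right endpoint, and maintain the set $R$ of not-yet-assigned intervals, initialized to all $n$ of them. The main loop runs while $|R| > s$: in iteration $i$, let $X_i$ be the $s$ intervals of $R$ with the smallest right endpoints, let $p_i$ be the largest right endpoint occurring in $X_i$, let $Y_i$ be the intervals of $R \setminus X_i$ that contain the point $p_i$, and then update $R \gets R \setminus (X_i \cup Y_i)$. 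When the loop halts, set $Z \gets R$, and let $r$ be the number of iterations performed.

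The structural observation I would isolate first is that any interval of $R \setminus X_i$ that meets some interval of $X_i$ must contain $p_i$: since $X_i$ holds the $s$ smallest right endpoints in $R$, every surviving interval has right endpoint strictly beyond $p_i$, so one that also overlaps an interval of $X_i$ (whose right endpoint is at most $p_i$) must straddle $p_i$. Hence $Y_i$ is exactly the set of surviving intervals meeting $X_i$, and because all its members share the point $p_i$ it is a clique, giving property~2. Moreover, every interval left in $R$ after the update has right endpoint past $p_i$ and does not contain $p_i$, so it lies entirely to the right of $p_i$, whereas each interval of $X_i$ ends at or before $p_i$; therefore $X_i$ has no edge to any vertex assigned in a later iteration nor to $Z$. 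Applying this to every $i$ yields property~1 for all pairs among $X_1,\dots,X_r,Z$. Finally, $|Z| = |R| \le s$ at termination, and the base case $n=s$ (where $r=0$ and $Z$ is everything) is immediate, so property~3 holds except possibly for the strict inequality $s < |X_i| + |Y_i|$.

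That strict inequality, i.e.\ the requirement that each separator be non-empty, is the one place the naive construction can fail, and I expect it to be the main obstacle: if $G$ happens to break exactly after $X_i$, then no surviving interval crosses $p_i$ and $Y_i$ comes out empty. The fix is local. In that situation the observation above shows that $X_i$ is already non-adjacent to all of $R \setminus X_i$, so I would simply move the single surviving interval of smallest right endpoint into $Y_i$: it forms a trivial one-vertex clique, it is non-adjacent to $X_i$, and deleting it from $R$ cannot create any new edge out of $X_i$, so properties~1 and~2 are untouched while $|Y_i| \ge 1$ is now guaranteed. With $|Y_i|\ge 1$ enforced in every iteration, each block $X_i \cup Y_i$ has at least $s+1$ vertices, which simultaneously certifies that the loop terminates and delivers the strict inequality $s = |X_i| < |X_i| + |Y_i|$, completing property~3 and hence the lemma.
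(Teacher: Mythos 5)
Your proof is correct and takes essentially the same route as the paper's: a left-to-right sweep that repeatedly carves off $s$ leftmost intervals as $X_i$ together with a clique separator $Y_i$ of intervals sharing a common point, leaving the final remnant as $Z$. The only real difference is that the paper chooses the cut as the smallest prefix window containing more than $s$ intervals, which makes $Y_i \neq \emptyset$ automatic, whereas your choice of the $s$ smallest right endpoints requires the explicit (and valid) patch that promotes one disconnected interval into $Y_i$ when no surviving interval crosses $p_i$.
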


\begin{proof}
For any family $\Z$ of intervals, and for $p \le q$,
denote by $\Z(p,q)$ the subfamily of intervals in $\Z$
that are contained in the interval $(p,q)$.

Obtain an interval representation $\I$ of $G$ with integer endpoints.
Let $a$ be the leftmost endpoint, and $c$ the rightmost endpoint, of the intervals in $\I$.

Initialize $\Z \gets \I$ and $i \gets 1$.
While $|\Z| > s$,
let $b$ be the smallest integer in $(a,c]$ such that
$|\Z(a,b-1)| \le s < |\Z(a,b)|$,
let $\X_i$ be any subfamily of $s$ intervals in $\Z$ such that
$\Z(a,b-1) \subseteq \X_i \subset \Z(a,b)$,
let $\Y_i$ be $\Z \setminus (\X_i \cup \Z(b,c))$,
then update $\Z \gets \Z \setminus (\X_i \cup \Y_i)$ and $i \gets i+1$.

Let $r \ge 0$ be the number of such partitioning rounds until $|\Z| \le s$.
Then $\I$ is partitioned into $2r+1$ subfamilies,
including $\X_i$ and $\Y_i$ for $1 \le i \le r$,
and a (possibly empty) subfamily $\Z$ in the end.
The following three properties can be easily verified:
\begin{enumerate}\setlength\itemsep{0pt}

\item
Intervals from different subfamilies among $\X_i$ and $\Z$ do not intersect.

\item
Intervals in each subfamily $\Y_i$ pairwise intersect.

\item
$|\Z| \le |\X_i| = s < |\X_i| + |\Y_i|$.

\end{enumerate}
Let $X_i$, $Y_i$, and $Z$ be the subsets of vertices of $G$ represented by
intervals in $\X_i$, $\Y_i$, and $\Z$, respectively.
Then the proof is complete.
\end{proof}

We now prove Theorem~\ref{thm:mu} that
$\mu(k,v) = ((v+1)^{k+1} - (v+1))/v$
for $k \ge 1$ and $v \ge 1$.
Fix $v \ge 1$,
and	let $n_k = ((v+1)^{k+1} - (v+1)) / v$.

Recall Lemma~\ref{lem:A} that for $k \ge 1$ and $v \ge 1$,
the interval graph $A_{k+1,v}$ has
$((v+1)^{k+1} - 1) / v$ vertices,
and satisfies
$\kappa(A_{k+1,v},v) = k+1$.
This implies the upper bound of
$\mu(k,v) \le ((v+1)^{k+1} - 1) / v - 1 = n_k$.
In the following we prove the matching lower bound of
$\mu(k,v) \ge n_k$ by induction on $k$.

For the base case when $k = 1$,
we have $n_1 = ((v+1)^{1+1} - (v+1)) / v = v+1$.
Any interval graph with at most $v+1$ vertices
obviously has claw number at most $v$.
Thus $\mu(1,v) \ge n_1$.
Now proceed to the inductive step when $k \ge 2$,
and let $G$ be an interval graph with at most $n_k$ vertices.

Apply Lemma~\ref{lem:2r+1} with $s = n_{k-1}$ to partition the vertex set
of $G$ into $2r+1$ subsets, $X_i$ and $Y_i$ for $1 \le i \le r$, and $Z$.
Since $n_k = (v+1)(n_{k-1} + 1) = (v+1)(s + 1)$,
we have $1 \le r \le v + 1$.
Consider two cases:
\begin{itemize}\setlength\itemsep{0pt}

\item $r \le v$.
Note that $|X_i| = n_{k-1}$ for $1 \le i \le r$, and that $|Z| \le n_{k-1}$.
By the induction hypothesis,
any interval graph with at most $n_{k-1}$ vertices
admits a vertex partition into $k-1$ induced subgraphs
with claw number at most $v$.
Then by disjoint union,
the subgraph of $G$ induced by all vertices
in the $r$ subsets $X_i$ and the subset $Z$
admits a vertex partition into $k-1$ subgraphs
with claw number at most $v$.
On the other hand, the subgraph of $G$ induced by vertices
in the $r$ subsets $Y_i$
is the union of $r$ cliques,
and has vertex-clique-partition number at most $r$,
and hence has claw number at most $r \le v$.

\item $r = v + 1$.
Then we must have $|X_i| = n_{k-1}$ and $|Y_i| = 1$ for $1 \le i \le v+1$, and $|Z| = 0$.
The subgraph of $G$ induced by all vertices in the $r$ subsets $X_i$
admits a vertex partition into $k-1$ subgraphs
with claw number at most $v$.
On the other hand, the subgraph of $G$ induced by the $v+1$ vertices
in the $v+1$ subsets $Y_i$
clearly has claw number at most $v$ too.

\end{itemize}
In both cases, $G$ admits a vertex partition into $k$ induced subgraphs
with claw number at most $v$.
This completes the proof of Theorem~\ref{thm:mu}.

\bigskip
We next prove Corollary~\ref{cor:kappa} that
$\kappa(n, v) = \lfloor\log_{v+1}(n v + 1)\rfloor$ for $n \ge 1$ and $v \ge 1$.
Fix any $v \ge 1$.

For $1 \le n \le v+1$,
we have $v + 1 \le n v + 1 < (v+1)^2$,
and hence
$\lfloor\log_{v+1}(n v + 1)\rfloor = 1$.
It is clear that $\kappa(n,v) = 1$ for this case.

Now fix $n \ge v+2$.
Then $n v + 1 \ge (v+1)^2$
and hence
$\lfloor\log_{v+1} (n v + 1)\rfloor \ge 2$.
Let $k = \lfloor\log_{v+1} (n v + 1)\rfloor$.
Then $n \ge ((v+1)^k - 1) / v$.
By Lemma~\ref{lem:A},
the interval graph $A_{k,v}$ with $((v+1)^k - 1) / v$ vertices
satisfies
$\kappa(A_{k,v}, v) = k$.
Thus
$\kappa(n,v) \ge \kappa(((v+1)^k - 1) / v, v) \ge k$.

For the other direction,
let $\kappa = \kappa(n,v)$.
Then by definition of $\mu$, we have $n \ge \mu(\kappa-1,v) + 1$.
By Theorem~\ref{thm:mu},
$\mu(\kappa-1,v) = ((v+1)^\kappa - (v+1)) / v$.
Thus
$n \ge ((v+1)^\kappa - 1) / v$.
It follows that
$\kappa \le \lfloor\log_{v+1} (n v + 1)\rfloor = k$.
This completes the proof of Corollary~\ref{cor:kappa}.

\section{$\check\mu(k,v)$ and $\check\kappa(w,v)$}

In this section we prove
Theorem~\ref{thm:mu'} and Corollary~\ref{cor:kappa'}.

We first prove Theorem~\ref{thm:mu'}.
Fix $k \ge 2$ and $v \ge 1$.
Recall Lemma~\ref{lem:A} that
the interval graph $A_{k+1,v}$ with claw number $(v+1)^k$
satisfies
$\kappa(A_{k+1,v},v) = k+1$.
This implies the upper bound of
$\check\mu(k,v) \le (v+1)^k - 1$.

Write
$w_1 = (v+1)^{k-1}/2$,
$w_2 = 2(v+1)^{k-1}/3$,
and
$w_3 = (v-2)(v+1)^{k-1}$.
Let $G$ be an interval graph with claw number
$w \le w_1$ when $v = 1$,
$w \le w_2$ when $v = 2$,
and $w \le w_3$ when $v \ge 3$.
In the following,
we obtain a vertex partition of $G$ into $k$ induced subgraphs
with claw number at most $v$,
hence proving the lower bounds
$\check\mu(k,v) \ge w_1$ when $v = 1$,
$\check\mu(k,v) \ge w_2$ when $v = 2$,
and
$\check\mu(k,v) \ge w_3$ when $v \ge 3$.

Let $\I$ be an interval representation of $G$ with integer endpoints.
Let $j = \vartheta(G)$.
Run the sweepline algorithm to obtain an independent set of $j$ intervals $T_i$
in $\I$,
and a vertex covering of $\I$ by $j$ cliques $\I_i$,
where $T_i \in \I_i$ for $1 \le i \le j$.

Let $s = 2w$ when $v = 1$,
$s = \lceil 3w/2 \rceil$ when $v = 2$,
and $s = \lceil w/(v-2) \rceil$ when $v \ge 3$.
Then for all $v \ge 1$,
$s \le (v+1)^{k-1}$.
Let $\J$ be the union of all subfamilies $\I_i$ with
$i \bmod s = 0$.
Then each connected component in the graph represented by $\I \setminus \J$
has vertex-clique-partition number at most $s - 1 \le (v+1)^{k-1} - 1$,
and hence admits a vertex partition into
$k-1$ induced subgraphs with claw number at most $v$,
by Lemma~\ref{lem:vartheta}.
Then by disjoint union,
the graph represented $\I \setminus \J$
also admits a vertex partition into
$k-1$ induced subgraphs with claw number at most $v$.
It remains to show that the graph represented by $\J$ has claw number at most $v$.

Suppose for contradiction that
the graph represented by $\J$ has claw number at least $v+1$.
Let $C$ and $L_1,\ldots,L_{v+1}$ be intervals representing the center and the
$v+1$ leaves of a star $K_{1,v+1}$ in the graph,
where the $v+1$ intervals $L_1,\ldots,L_{v+1}$ are ordered from left to right.
We now proceed in three different ways depending on the value of $v$:
\begin{enumerate}\setlength\itemsep{0pt}

\item $v = 1$.
Either $L_1$ and $C$, or $C$ and $L_2$, are two intervals from
two subfamilies $\I_{ps}$ and $\I_{qs}$ with $q - p \ge 1$.
The union of the two intervals
is a contiguous interval that intersects all intervals in
$qs-ps+1$ consecutive subfamilies $\I_i$, $ps \le i \le qs$,
which include
the $qs-ps+1$ disjoint intervals $T_i$, $ps \le i \le qs$.
Then one of these two intervals intersects at least
$\lceil (qs-ps+1)/2 \rceil
\ge \lceil (s+1)/2 \rceil = \lceil (2w+1)/2 \rceil = w + 1$
disjoint intervals $T_i$.

\item $v = 2$.
$L_1$ and $L_3$ are from
two subfamilies $\I_{ps}$ and $\I_{qs}$ with $q - p \ge 2$.
The union of $L_1$, $C$, and $L_3$
is a contiguous interval that intersects all intervals in
$qs-ps+1$ consecutive subfamilies $\I_i$, $ps \le i \le qs$.
One of these three intervals intersects at least
$\lceil (qs-ps+1)/3 \rceil
\ge \lceil (2s+1)/3 \rceil \ge \lceil (3w+1)/3 \rceil = w + 1$
disjoint intervals $T_i$.

\item $v \ge 3$.
$L_2$ and $L_v$ are from
two subfamilies $\I_{ps}$ and $\I_{qs}$ with $q - p \ge v - 2$.
$C$ intersects all intervals in
$qs-ps+1$ consecutive subfamilies $\I_i$, $ps \le i \le qs$.
Note that
$qs - ps + 1 \ge (v - 2)s + 1 \ge w + 1$.
Thus $C$ intersects at least $w + 1$ disjoint intervals $T_i$.

\end{enumerate}
In each of the three cases,
we can find a star $K_{1,w+1}$ in $G$ represented by
some interval in $\J$ intersecting
$w+1$ disjoint intervals $T_i \in \I_i$,
a contradiction to our assumption that $G$ has claw number $w$.
Thus the graph represented by $\J$ must have claw number at most $v$.
This completes the proof of Theorem~\ref{thm:mu'}.

\bigskip
We next prove the bounds on
$\check\kappa(w,v)$ in Corollary~\ref{cor:kappa'}
for $w > v \ge 1$.
Let $k = \lfloor\log_{v+1} w\rfloor + 1$.
Then $w \ge (v+1)^{k-1}$ and $k \ge 2$.
By Lemma~\ref{lem:A},
the interval graph $A_{k,v}$ with claw number $(v+1)^{k-1}$
satisfies $\kappa(A_{k,v},v) = k$.
Thus we have the lower bound
$\check\kappa(w,v) \ge \check\kappa((v+1)^{k-1},v) \ge k$.

For the other direction,
let $\check\kappa = \check\kappa(w,v)$, where $\check\kappa \ge 2$.
Then by definition of $\check\mu$, we have $w \ge \check\mu(\check\kappa-1,v) + 1$.
By the lower bounds in Theorem~\ref{thm:mu'},
we have
$w \ge (v+1)^{\check\kappa-2}/2 + 1$ when $v = 1$,
$w \ge 2(v+1)^{\check\kappa-2}/3 + 1$ when $v = 2$,
and
$w \ge (v-2)(v+1)^{\check\kappa-2} + 1$ when $v \ge 3$.
Correspondingly,
\begin{itemize}\setlength\itemsep{0pt}

\item
when $v = 1$,\,
$\check\kappa(w, v)
\le \lfloor \log_{v+1}{2(w-1)} \rfloor + 2
= \lfloor \log_{v+1}{(w-1)} \rfloor + 3
\le \lfloor \log_{v+1}{w} \rfloor + 3$,

\item
when $v = 2$,\,
$\check\kappa(w, v)
\le \lfloor \log_{v+1}{3(w-1)/2} \rfloor + 2
= \lfloor \log_{v+1}{(w-1)/2} \rfloor + 3
\le \lfloor \log_{v+1}{w} \rfloor + 3$,

\item
when $v \ge 3$,\,
$\check\kappa(w, v)
\le \lfloor \log_{v+1}{(w-1)/(v-2)} \rfloor + 2
\le \lfloor \log_{v+1}{w} \rfloor + 2$.

\end{itemize}
Thus we have the upper bounds on $\check\kappa(w,v)$.
This completes the proof of Corollary~\ref{cor:kappa'}.

\section{$\check\kappa(w,v)$ for small $w$ and $v$}

In this section we prove Theorem~\ref{thm:small} that
$\check\kappa(2,1) = 2$, $\check\kappa(3,1) = 3$,
$\check\kappa(3,2) = 2$,
$\check\kappa(4,2) = \check\kappa(5,2) = \check\kappa(6,2) = 3$,
and
$\check\kappa(5,3) = 2$.

We first prove that $\check\kappa(2,1) = 2$ and $\check\kappa(3,1) = 3$.
Recall~\cite[Theorem~4]{AJHL89} that
$\check\kappa(w,1) \le w$ for any $w \ge 1$.
In particular,
$\check\kappa(2,1) \le 2$ and $\check\kappa(3,1) \le 3$.
From the other direction, it is clear that
$\check\kappa(w,v) \ge 2$ for any $w > v \ge 1$.
Thus $\check\kappa(2,1) = 2$.
The following lemma implies that $\check\kappa(3,1) \ge 3$,
and hence we have $\check\kappa(3,1) = 3$.

\begin{lemma}\label{lem:w3v1}
Let $\J_3$ be the set of $12$ open intervals with lengths $1$, $2$, and $3$,
and with integer endpoints between $0$ and $5$.
Then the interval graph represented by $\J_3$ has claw number $3$
and admits a vertex partition into three subgraphs
with claw number at most $1$,
but it admits no vertex partition into two subgraphs
with claw number at most $1$.
\end{lemma}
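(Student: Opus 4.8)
The plan is to work throughout with the reformulation that ``claw number at most $1$'' means ``cluster graph'', i.e.\ a disjoint union of cliques, equivalently a graph with no induced $P_3 = K_{1,2}$. For a family of intervals this says: no three equally-coloured intervals $I,J,K$ with $I\cap K\neq\emptyset$, $J\cap K\neq\emptyset$, and $I\cap J=\emptyset$ (an induced $P_3$ with centre $K$). Thus a partition into $k$ subgraphs of claw number $\le 1$ is exactly a $k$-colouring in which every colour class is $P_3$-free. I will name the $12$ intervals $u_i=(i,i+1)$ for $0\le i\le 4$, $d_i=(i,i+2)$ for $0\le i\le 3$, and $t_i=(i,i+3)$ for $0\le i\le 2$, and record the reflection symmetry $x\mapsto 5-x$, which acts by $u_i\mapsto u_{4-i}$, $d_i\mapsto d_{3-i}$, $t_i\mapsto t_{2-i}$; this symmetry will halve the case analysis. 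Three things must be shown: the claw number is exactly $3$, a $3$-colouring into cluster graphs exists, and no such $2$-colouring exists.

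The first two points are short. For the claw number, $t_1=(1,4)$ together with the pairwise-disjoint leaves $u_1,u_2,u_3$ is an induced $K_{1,3}$, so $\psi\ge 3$; conversely, consecutive members of a pairwise-disjoint family are separated by an integer endpoint lying in the interior of any interval meeting all of them, so a centre meeting $m$ pairwise-disjoint intervals has at least $m-1$ interior integers, and since every interval here has length $\le 3$ with integer endpoints its interior holds at most two integers, giving $m\le 3$ and hence $\psi\le 3$. For the $3$-colouring I will exhibit the explicit partition into $\{u_0,\dots,u_4\}$ (pairwise disjoint, hence independent), $\{t_0,t_1,t_2,d_1,d_2\}$ (all containing the point $2.5$, hence a clique), and $\{d_0,d_3\}$ (disjoint); each class is a cluster graph.

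The crux, and the main obstacle, is ruling out a $2$-colouring. I will argue by contradiction with colours $1,2$ and branch on the colours of the three central unit intervals $u_1,u_2,u_3$. The engine is a short list of forced induced $P_3$'s: each length-$2$ centre gives $u_i - d_i - u_{i+1}$, so these three are never monochromatic; each length-$3$ centre $t_i$ gives an induced $P_3$ through $t_i$ for any two of its pairwise-disjoint unit leaves, so at most one leaf may share the colour of $t_i$; and there are ``mixed'' $P_3$'s such as $u_0 - t_0 - t_1$ (since $u_0\cap t_1=\emptyset$) and $d_0 - t_0 - u_2$ (since $d_0\cap u_2=\emptyset$). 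Among $u_1,u_2,u_3$ either two consecutive ones share a colour (one case up to the reflection) or $u_1,u_3$ share a colour while $u_2$ takes the other (a reflection-symmetric case).

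In the first case, say $u_1,u_2$ have colour $1$; the claws at $t_0$ and $t_1$ force $t_0,t_1$ to colour $2$, the mixed $P_3$ $u_0 - t_0 - t_1$ forces $u_0$ to colour $1$, and $u_0 - d_0 - u_1$ forces $d_0$ to colour $2$. Now every colour-$2$ neighbour of $t_1$ must intersect $d_0=(0,2)$ (else an induced $P_3$ through $t_1$), which forces $u_3,d_2,d_3,t_2$ all to colour $1$; but then $u_2 - t_2 - u_3$ is a monochromatic $P_3$, a contradiction. In the second case, with $u_1,u_3$ of colour $2$ and $u_2$ of colour $1$, the claw at $t_1$ forces $t_1$ to colour $1$, and a two-subcase split on $t_0,t_2$ finishes it: if both have colour $2$ then $u_1 - t_0 - t_2$ is monochromatic, while otherwise (up to reflection) $t_0$ has colour $1$, and the chain $u_0 - t_0 - t_1\Rightarrow u_0$ colour $2$, then $u_0 - d_0 - u_1\Rightarrow d_0$ colour $1$, yields the monochromatic $P_3$ $d_0 - t_0 - u_2$. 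I expect the difficulty to be entirely combinatorial bookkeeping: verifying that each forcing step is a genuine induced $P_3$ and that the branching is exhaustive, both of which the reflection symmetry keeps to a manageable size.
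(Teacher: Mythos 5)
Your proposal is correct and takes essentially the same approach as the paper: both argue the lower bound by contradiction, branching on the two-colouring of the unit intervals, forcing the colours of the length-$2$ and length-$3$ intervals, and exhibiting a monochromatic induced $K_{1,2}$ (the paper organizes this into three cases on all five unit intervals, while you branch on the middle three, but the forcing engine is identical). All of your individual forcing steps and the exhaustiveness of your case split check out.
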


\begin{proof}
The interval graph represented by $\J_3$,
with maximum interval length $3$,
clearly has claw number $3$.
Moreover, it admits a vertex partition, by left endpoint modulo $3$,
into three subgraphs with claw number at most $1$ (that is, three cluster graphs).
To show that the graph admits no vertex partition into two subgraphs
with claw number at most $1$,
we suppose the contrary and color each interval in $\J_3$ either white or black
according to such a partition,
then find three intervals of the same color that represent $K_{1,2}$ to reach
a contradiction.

\begin{figure}[htbp]
\centering\includegraphics{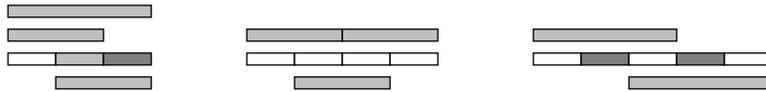}
\caption{Intervals from $\J_3$ for the three cases in the proof of Lemma~\ref{lem:w3v1}.}
\label{fig:w3v1}
\end{figure}

Refer to Figure~\ref{fig:w3v1}.
Consider three cases:
\begin{enumerate}\setlength\itemsep{0pt}
\item
$\J_3$ contains three consecutive unit intervals among which
the left and the right have opposite colors.
Let $(a,a+1)$, $(a+1,a+2)$, $(a+2,a+3)$ be the three unit intervals.
Consider also the two length-$2$ intervals $(a,a+2)$ and $(a+1,a+3)$,
and the length-$3$ interval $(a,a+3)$.
By symmetry,
we can assume without loss of generality that
$(a,a+1)$ and $(a+1,a+2)$ are white, and $(a+2,a+3)$ is black.
Then both $(a,a+2)$ and $(a,a+3)$ must be black to avoid forming a white star $K_{1,2}$
with $(a,a+1)$ and $(a+1,a+2)$.
But then $(a,a+3)$, $(a,a+2)$, $(a+2,a+3)$ form a black star $K_{1,2}$.
\item
The five unit intervals in $\J_3$ all have the same color.
Consider any four consecutive unit intervals,
say, $(a,a+1)$, $(a+1,a+2)$, $(a+2,a+3)$, $(a+3,a+4)$.
Assume without loss of generality that they are all white.
Consider also the three length-$2$ intervals $(a,a+2)$, $(a+1,a+3)$, $(a+2,a+4)$.
Either one of them is white,
forming a white star $K_{1,2}$ with two unit intervals,
or all of them are black, forming a black star $K_{1,2}$ by themselves.
\item
The five unit intervals in $\J_3$ have alternating colors.
Let $(a,a+1)$, $(a+1,a+2)$, $(a+2,a+3)$, $(a+3,a+4)$, $(a+4,a+5)$
be the five unit intervals.
Assume without loss of generality that
$(a,a+1)$, $(a+2,a+3)$, $(a+4,a+5)$ are white,
$(a+1,a+2)$ and $(a+3,a+4)$ are black.
Then the length-$3$ interval $(a,a+3)$ must be black
to avoid forming a white star $K_{1,2}$ with $(a,a+1)$ and $(a+2,a+3)$.
Similarly,
the length-$3$ interval $(a+2,a+5)$ must be black
to avoid forming a white star $K_{1,2}$ with $(a+2,a+3)$ and $(a+4,a+5)$.
Then the two intervals $(a,a+3)$ and $(a+2,a+5)$, together with either 
$(a+1,a+2)$ or $(a+3,a+4)$,
form a black star $K_{1,2}$.
\end{enumerate}
In each case, there are three intervals in $\J_3$
representing a monochromatic star $K_{1,2}$.
\end{proof}

We next prove that
$\check\kappa(3,2) = 2$
and
$\check\kappa(4,2) = \check\kappa(5,2) = \check\kappa(6,2) = 3$.
By our upper bound of
$\check\kappa(w,v) \le \lfloor \log_{v+1} 3(w-1)/2 \rfloor + 2$
for $w > v = 2$ in Corollary~\ref{cor:kappa'},
we have $\check\kappa(6,2) \le 3$.
On the other hand, it is clear that
$\check\kappa(3,2) \ge 2$.
Thus
\begin{equation}\label{eq:v2}
2 \le \check\kappa(3,2) \le \check\kappa(4,2) \le \check\kappa(5,2) \le \check\kappa(6,2) \le 3.
\end{equation}

As a warm-up exercise,
we first prove the following lemma
which implies that $\check\kappa(6,2) \ge 3$,
and hence $\check\kappa(6,2) = 3$.

\begin{lemma}\label{lem:w6v2}
Let $\J_6$ be the family of $24$ open intervals with lengths $1$, $3$, $5$,
and $6$ as illustrated in Figure~\ref{fig:w6v2}.
Then the interval graph represented by $\J_6$ has claw number $6$
and admits a vertex partition into three subgraphs
with claw number at most $2$,
but it admits no vertex partition into two subgraphs
with claw number at most $2$.
\end{lemma}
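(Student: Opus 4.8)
The plan is to follow the blueprint of the warm-up Lemma~\ref{lem:w3v1}, but now hunting for a monochromatic induced $K_{1,3}$ rather than a monochromatic $K_{1,2}$. First I would dispose of the two easy claims. Since every interval of $\J_6$ has length between $1$ and $6$, the claw number is exactly $6$: the length-$6$ interval meets six pairwise-disjoint unit intervals, giving $K_{1,6}$, while no interval can meet seven pairwise-disjoint intervals, so $K_{1,7}$ never appears (each such leaf would clip a disjoint length-$\ge 1$ piece out of a length-$6$ window, and at most six of those fit). For the existence of a partition into three claw-$2$ subgraphs, the cleanest route is to invoke the already-proven upper bound $\check\kappa(6,2)\le 3$ of Corollary~\ref{cor:kappa'}, which applies to every interval graph of claw number $6$ and hence to $\J_6$; alternatively one exhibits a direct modular partition, as in Lemma~\ref{lem:w3v1}, and checks that no color class contains an induced $K_{1,3}$.

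The substance is the lower bound. Assuming a white/black $2$-coloring of $\J_6$ in which neither color class contains an induced $K_{1,3}$, I would derive a contradiction by producing three pairwise-disjoint leaves of one color together with a same-colored center meeting all three. As in Lemma~\ref{lem:w3v1}, the natural pivot is the coloring of the unit intervals: a pigeonhole split on their colors reduces to a short list of patterns (long runs of like-colored consecutive units versus alternating stretches), and within each pattern the longer intervals of lengths $3$, $5$, and $6$ are forced, step by step, into prescribed colors in order to avoid premature monochromatic claws, until one of them is compelled to become the center of a monochromatic $K_{1,3}$. The figure's symmetry should let me treat black/white and left/right symmetric patterns together, keeping the genuinely distinct cases few.

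The hard part will be controlling the case explosion: unlike the $v=1$ warm-up, where a claw needs only two disjoint leaves, here each claw needs three, so I must simultaneously track the colors of three distinct interval lengths and guarantee that every coloring of the units forces enough rigidity among the longer intervals. The key leverage, exactly as in Lemma~\ref{lem:w3v1}, is the forcing rule: whenever two same-colored intervals are far enough apart that a longer interval spans both, that spanning interval is pushed to the opposite color to avoid completing a monochromatic star; chaining these forcings left to right along the line should, in every pattern, pin down a center of one color together with three same-colored disjoint leaves. I expect that organizing the argument as such a sweep, rather than an unstructured enumeration, is what keeps the proof to a handful of cases.
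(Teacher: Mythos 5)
Your treatment of the two easy claims is fine: the claw number computation is correct (with integer endpoints, seven pairwise disjoint intervals cannot all meet a length-$6$ interval, since each would need a private overlap of length at least $1$), and invoking the already-established bound $\check\kappa(6,2)\le 3$ from Corollary~\ref{cor:kappa'} for the three-part partition is legitimate and non-circular (the paper itself does exactly this just before the lemma; its proof instead exhibits the explicit partition by lengths $\{1\}\cup\{3\}\cup\{5,6\}$).

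The genuine gap is that the lower bound --- the entire substance of the lemma --- is never actually proved; it is only announced as a plan. You propose a pigeonhole split on the colors of the unit intervals followed by a left-to-right forcing sweep, and then concede that you have not controlled the case explosion and merely ``expect'' the sweep to close in a handful of cases. Worse, the forcing rule you name as your key leverage (two same-colored disjoint intervals spanned by a longer interval force that interval to the opposite color) is the $v=1$ rule from Lemma~\ref{lem:w3v1} and is vacuous here: for claw number at most $2$, a center with only two disjoint same-colored leaves is a permitted $K_{1,2}$, so nothing is forced until you have \emph{three} disjoint same-colored leaves. The paper's proof avoids any sweep or pattern enumeration by exploiting the specific design of $\J_6$ (Figure~\ref{fig:w6v2}): any two intervals whose common intersection contains five unit intervals must get the \emph{same} color, since otherwise one of them matches the majority color of those five units (pigeonhole: three of five) and completes a monochromatic $K_{1,3}$; chaining this rule over the two length-$5$ and eight length-$6$ intervals makes all ten of them monochromatic. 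Then the middle length-$3$ interval and the three units inside it finish the argument: if any of these four shares the long intervals' color, it forms a monochromatic star with the two (disjoint) length-$5$ intervals as co-leaves and a length-$6$ interval as center; otherwise all four share the opposite color and form a $K_{1,3}$ by themselves. Without an analogue of this ``five shared units force equal colors'' observation, or an actually executed case analysis in the style of the much longer proof of Lemma~\ref{lem:w5v2}, your proposal does not constitute a proof.
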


\begin{figure}[htbp]
\centering\includegraphics{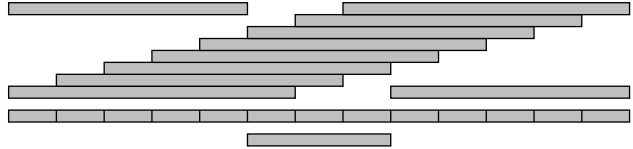}
\caption{The family $\J_6$ of $24$ intervals includes
$13$, $1$, $2$, and $8$ intervals with lengths $1$, $3$, $5$, and $6$,
respectively.}
\label{fig:w6v2}
\end{figure}

\begin{proof}
The interval graph represented by $\J_6$ clearly has claw number $6$,
and admits a vertex partition, by lengths $\{1\} \cup \{3\} \cup \{5,6\}$,
into three subgraphs with claw number at most $2$.
Suppose for contradiction that it admits a vertex partition into two
subgraphs with claw number at most $2$.
Then any two intervals in $\J_6$ containing five unit intervals in their
intersection must have the same color,
because otherwise one of them would have the same color as three of the five unit intervals,
forming a monochromatic star $K_{1,3}$.
Consequently all length-$5$ and length-$6$ intervals must have the same color.

Consider the length-$3$ interval in the middle
and the three unit intervals contained in it.
If any of these four intervals has the same color as the length-$5$ and length-$6$ intervals,
then it would form a monochromatic star together with the two length-$5$ intervals
and a length-$6$ interval that intersects all of them.
Otherwise, these four intervals in the middle would have the same color,
and would form a monochromatic star $K_{1,3}$ by themselves.
In both cases, we reach a contradiction.
\end{proof}

The next lemma implies that $\check\kappa(5,2) \ge 3$,
and hence $\check\kappa(5,2) = 3$.

\begin{lemma}\label{lem:w5v2}
Let $\J_5$ be the family of open intervals with lengths $1$, $3$, and $5$,
and with integer endpoints between $0$ and $79$.
The interval graph represented by $\J_5$ has claw number $5$
and admits a vertex partition into three subgraphs
with claw number at most $2$,
but it admits no vertex partition into two subgraphs
with claw number at most $2$.
\end{lemma}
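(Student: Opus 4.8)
The three assertions are handled separately. For the claw number, note that the maximum interval length is $5$: a length-$5$ interval $(a,a+5)$ contains the five pairwise-disjoint unit intervals $(a,a+1),\dots,(a+4,a+5)$, giving an induced $K_{1,5}$. Conversely, any leaf of a claw meets its centre in an overlap with integer endpoints, hence of length at least $1$; since pairwise-disjoint leaves have disjoint overlaps with the centre, a centre of length at most $5$ can meet at most $5$ pairwise-disjoint leaves, so there is no induced $K_{1,6}$. Thus the claw number is exactly $5$. For the $3$-partition I would simply split $\J_5$ by interval length into the three classes of lengths $1$, $3$, and $5$; each class consists of equal-length intervals and is therefore a proper interval graph, so has claw number at most $2$ (indeed the length-$1$ class is an independent set, since distinct unit intervals share only an integer endpoint). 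This realizes the required partition into three subgraphs with claw number at most $2$.

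It remains to rule out a $2$-partition into subgraphs of claw number at most $2$. The plan is to suppose such a partition exists and colour every interval of $\J_5$ white or black accordingly, so that neither colour class contains an induced $K_{1,3}$, i.e.\ a centre meeting three pairwise-disjoint leaves of its own colour; I would then derive a contradiction by exhibiting a monochromatic $K_{1,3}$. First I would record the local forcing this imposes on the unit intervals $U_i=(i,i+1)$. Because a length-$\ell$ interval contains $\ell$ consecutive pairwise-disjoint units, if three consecutive units $U_i,U_{i+1},U_{i+2}$ share a colour then the enclosing length-$3$ interval $(i,i+3)$ must take the opposite colour, and among any five consecutive units at most two may share the colour of the enclosing length-$5$ interval $(i,i+5)$; equivalently, each length-$5$ interval is forced to the minority colour of its window. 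Additional constraints arise when a length-$3$ or length-$5$ interval serves as one of three disjoint leaves: for instance a length-$5$ centre $(a,a+5)$ together with the pairwise-disjoint leaves $(a-2,a+1)$, $(a+1,a+2)$, and $(a+2,a+5)$ forbids those four intervals from all sharing a colour. Reading all such rules off the binary string $c_0c_1\cdots c_{78}$ of unit colours yields a finite list of forbidden local colour patterns on a bounded window of consecutive units.

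The final step is to show that no colouring of the whole line $0,\dots,79$ can avoid every forbidden pattern, in the spirit of the three-case analysis used for Lemma~\ref{lem:w3v1} but now driven by the interaction of the length-$3$ and length-$5$ intervals. Concretely, I would organize the argument around the runs and alternations of the unit string: long monochromatic runs are eliminated by the length-$3$ rule, balanced or alternating stretches are eliminated by pairing forced-minority length-$5$ intervals with a protruding length-$3$ leaf and a unit leaf to build a monochromatic $K_{1,3}$, and the remaining mixed patterns are reduced to these cases by the length-$5$ forcing. The range $0$ to $79$ is chosen long enough that a pigeonhole/propagation step guarantees one of the forbidden configurations to appear regardless of how the string begins. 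The hard part will be precisely this step: reconciling the length-$3$ and length-$5$ constraints simultaneously and verifying that their combination admits no consistent colouring of a window of this length. Once that finite case analysis is carried out---equivalently, once an emptiness check on the constrained string of unit colours is completed---a monochromatic $K_{1,3}$ emerges, contradicting the assumed $2$-partition and completing the proof that $\check\kappa(5,2)=3$.
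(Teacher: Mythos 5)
Your treatment of the two easy assertions is fine: the argument that the claw number is exactly $5$ (disjoint overlaps of length at least $1$ inside a length-$5$ centre) and the partition by lengths into three proper interval subgraphs both work. The problem is the third assertion, which is the entire content of the lemma. What you have written there is a plan, not a proof: you set up the right framing (two-colour the intervals, seek a monochromatic $K_{1,3}$), record some correct local forcing rules (a length-$3$ interval must avoid the colour of three consecutive same-coloured units; a length-$5$ interval is forced to the minority colour of its window), and then assert that ``a pigeonhole/propagation step guarantees one of the forbidden configurations to appear.'' You yourself flag this as ``the hard part,'' and it is precisely the step that is missing. Nothing in the proposal shows that the length-$3$ and length-$5$ constraints are jointly unsatisfiable over the range $0$ to $79$; without that, no contradiction has been derived, and the lemma is not proved.

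For comparison, the paper's proof supplies concrete machinery exactly where your sketch is vague. It first proves a structural property of each length-$5$ interval (it cannot have both a ``left neighbour'' and a ``right neighbour'' of its own colour), then partitions the unit intervals into four groups by left endpoint modulo $4$ and studies chains of $4$-adjacent units. A propagation argument shows each chain contains at most one monochromatic subchain, of length two or three; marking one ``hole'' per such subchain gives at most four holes, so by pigeonhole some block of at least $15$ consecutive units has all $4$-adjacent pairs bi-coloured. Within such a block the unit colouring is periodic with period $8$, every period-$8$ pattern is a rotation of $10100101$ or $11000011$, and a block of length $15 = 2\cdot 8 - 1$ must contain one of these two strings; in either case two length-$5$ intervals are forced black and then form a black $K_{1,3}$ with two black units. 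None of this structure (or any equivalent mechanism, such as an actually executed transfer-matrix or exhaustive search, in the spirit of the paper's computer-assisted Lemma~\ref{lem:w4v2}) appears in your proposal, so the core of the argument remains an unproved claim.
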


\begin{proof}
The interval graph represented by $\J_5$ clearly has claw number $5$,
and admits a vertex partition (by lengths) into three subgraphs
with claw number at most $2$.
To show that the graph admits no vertex partition into two subgraphs
with claw number at most $2$,
we suppose the contrary and color each interval in $\J_5$ either white or black
according to such a partition,
then find four intervals of the same color that represent a star $K_{1,3}$
to reach a contradiction.

\begin{figure}[htbp]
\centering\includegraphics{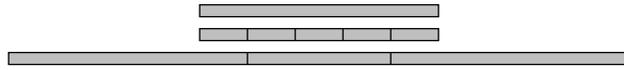}
\caption{A length-$5$ interval and eight neighbors in $\J_5$.}
\label{fig:w5v2}
\end{figure}

Consider any subset of nine intervals in $\J_5$ whose lengths and relative
positions are as illustrated in Figure~\ref{fig:w5v2}.
Focus on the length-$5$ interval in the middle,
and consider the other eight intervals as its neighbors.
We refer to the length-$5$ interval on the left
and the leftmost unit interval as the \emph{left neighbors},
the length-$5$ interval on the right
and the rightmost unit interval as the \emph{right neighbors},
the length-$3$ interval and the three unit intervals
in the middle as the \emph{middle neighbors},
respectively,
of the length-$5$ interval in the middle.
If the length-$5$ interval in the middle has the same color as
both a left neighbor and a right neighbor,
then to avoid a monochromatic star $K_{1,3}$ it cannot have the same color as
any of its middle neighbors.
But then the four middle neighbors would have the same color,
and form a monochromatic star $K_{1,3}$ by themselves.
Thus we have the following property:
\begin{quote}
No length-$5$ interval in $\J_5$ can have
both a left neighbor and a right neighbor of the same color as itself.
\end{quote}

Partition the unit intervals in $\J_5$ into four \emph{groups}
according to their left endpoints modulo $4$.
We say that two unit intervals in the same group are
\emph{$4$-adjacent} if their left endpoints differ by exactly $4$.
For any two $4$-adjacent unit intervals $A$ and $B$ in $\J_5$,
denote by $AB$ the unique length-$5$ interval in $\J_5$ that contains them.

\begin{figure}[htbp]
\centering\includegraphics{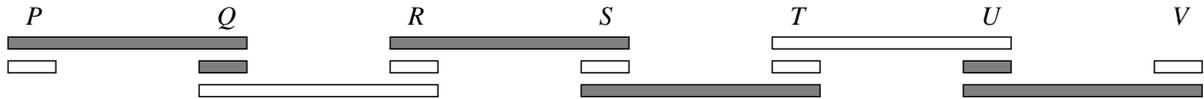}
\caption{A chain of consecutive $4$-adjacent unit intervals
$P,Q,R,S,T,U,V$.}
\label{fig:w5v2chain}
\end{figure}

Consider a chain of $4$-adjacent unit intervals from the same group in $\J_5$,
in the middle row of the three rows of intervals illustrated in Figure~\ref{fig:w5v2chain}.
Suppose that both $R$ and $S$ are white.
Then $RS$ must be black.
Then $QR$ and $ST$ cannot be both black.
Without loss of generality, assume that $QR$ is white.
Then $Q$ is black, and so is $PQ$.
Then the argument repeats
as the chain extends further to the left with alternating colors:
$P$ is white,
the unit interval that is $4$-adjacent to $P$ on the left, if any, is
black, and so on.
Now consider $ST$.
If $ST$ is white, then the situation is symmetric to the case that
$QR$ is white,
with respect to the starting interval $RS$.
If $ST$ is black, then $T$ is white and so is $TU$.
Then the argument repeats
as the chain extends further to the right with alternating colors,
for $U$ and $V$ and so on.

In summary,
in the chain of $4$-adjacent unit intervals from each group,
there is at most one monochromatic subchain
of two or more consecutive $4$-adjacent unit intervals
of the same color,
and moreover the length of such a monochromatic subchain, if it exists,
is either two or three.
For each such monochromatic subchain, if its length is two, mark any one
of the two unit intervals as a hole,
or else if its length is three, mark the unit interval in
the middle (for example, $S$ among $R,S,T$ as in Figure~\ref{fig:w5v2chain}) as a hole.
Then there are at most $h \le 4$ holes from the four groups,
and they separate the other $79 - h$ unit intervals in $\J_5$ into
at most $h+1$ contiguous \emph{blocks}.
The longest block has length at least
$(79-4)/(4+1) = 15$.
Within each block, every two $4$-adjacent unit intervals have
different colors.

Encode white as $0$ and black as $1$.
Then the colors of all unit intervals in each block form a periodic
binary string of period $8$,
whose periodic pattern is the concatenation of a $4$-bit binary string
and its complement.
There are $16$ periodic patterns:
\begin{gather*}
00001111
\quad
00011110
\quad
00101101
\quad
00111100
\quad
01001011
\quad
01011010
\quad
01101001
\quad
01111000
\\
10000111
\quad
10010110
\quad
10100101
\quad
10110100
\quad
11000011
\quad
11010010
\quad
11100001
\quad
11110000
\end{gather*}
It is easy to verify that
each of the $16$ patterns is a rotation of either $10100101$ or $11000011$.
Since the longest block has length at least $15 = 2\cdot 8 - 1$,
the binary string corresponding to this block must contain 
either $10100101$ or $11000011$ as a substring.

\begin{figure}[htbp]
\centering\includegraphics{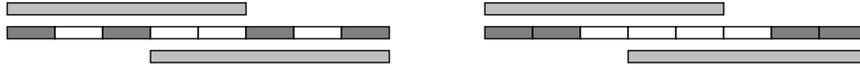}
\caption{The colors of all unit intervals in each block form a periodic binary string.
Left: two length-$5$ intervals and eight unit intervals for
the periodic pattern $10100101$.
Right: two length-$5$ intervals and eight unit intervals for
the periodic pattern $11000011$.}
\label{fig:w5v2pattern}
\end{figure}

Refer to Figure~\ref{fig:w5v2pattern} for the two cases.
Consider the two length-$5$ intervals containing
the five unit intervals corresponding to the first five bits
and the last five bits, respectively, of this substring.
Since each of them contains three white unit intervals,
they must both be black to avoid forming a white star $K_{1,3}$.
But then the two black length-$5$ intervals, and the two black unit
intervals contained by either of them,
would form a black star $K_{1,3}$.
\end{proof}

The next lemma implies that $\check\kappa(4,2) \ge 3$,
and hence $\check\kappa(4,2) = 3$.

\begin{lemma}\label{lem:w4v2}
Let $\J_4$
be the family of open intervals with integer lengths between $2$ and $7$,
and with integer endpoints between $0$ and $21$.
The interval graph represented by $\J_4$ has claw number $4$
and admits a vertex partition into three subgraphs
with claw number at most $2$,
but it admits no vertex partition into two subgraphs
with claw number at most $2$.
\end{lemma}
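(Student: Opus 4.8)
The plan is to prove the three assertions separately, putting almost all of the work into the non-existence of a $2$-partition. For the claw number I would first isolate a \emph{span observation}: in any family of intervals whose lengths lie between $\ell_{\min}$ and $\ell_{\max}$, a center $C=(a,b)$ with $b-a\le\ell_{\max}$ can meet pairwise-disjoint leaves $L_1,\dots,L_v$ (ordered left to right) of which the inner ones $L_2,\dots,L_{v-1}$ all lie in the interval $(r_1,l_v)$, where $r_1>a$ is the right endpoint of $L_1$ and $l_v<b$ the left endpoint of $L_v$. Since this interval has length at most $(b-a)-2\le\ell_{\max}-2$ and each inner leaf has length at least $\ell_{\min}$, there are at most $\lfloor(\ell_{\max}-2)/\ell_{\min}\rfloor$ inner leaves and hence at most $\lfloor(\ell_{\max}-2)/\ell_{\min}\rfloor+2$ leaves in all. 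With $\ell_{\min}=2$ and $\ell_{\max}=7$ this yields $v\le 4$, so $\psi\le 4$; the explicit $K_{1,4}$ with center $(0,7)$ and leaves $(0,2),(2,4),(4,6),(6,8)$ shows $\psi\ge 4$.

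For the $3$-partition I would split $\J_4$ by length into the three groups $\{2,3\}$, $\{4,5\}$, and $\{6,7\}$. Each group has $\ell_{\max}-\ell_{\min}\le 1$, so $\ell_{\max}-2\le\ell_{\min}-1<\ell_{\min}$ and the span observation forbids any inner leaf; hence each group has claw number at most $2$, giving the required partition into three proper interval subgraphs.

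The crux is the impossibility of a $2$-partition, and here I would imitate the colouring arguments of Lemmas~\ref{lem:w6v2} and~\ref{lem:w5v2}. Assume a $2$-colouring of $\J_4$ whose two colour classes both have claw number at most $2$, colour each interval white or black accordingly, and aim to produce four same-colour intervals forming a monochromatic $K_{1,3}$. The propagation is driven by two local rules. First, any interval that meets three pairwise-disjoint intervals of a single colour must take the opposite colour, since otherwise it is already the center of a monochromatic claw. Second, a length-$7$ (or length-$6$) center $C$ meets the four pairwise-disjoint length-$2$ leaves $(a,a+2),(a+2,a+4),(a+4,a+6),(a+6,a+8)$, so at most two of these four leaves may share $C$'s colour. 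Starting from these rules and rigidifying the colouring with the intermediate lengths $3,4,5,6$ as auxiliary centers and leaves, I would propagate colour constraints along the length-$2$ intervals $(i,i+2)$ for $0\le i\le 19$ until the window $[0,21]$ is forced to contain a monochromatic $K_{1,3}$.

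I expect this propagation step to be the main obstacle. In Lemma~\ref{lem:w5v2} the centers have capacity $3$ and the colourings of the unit intervals settle into period-$8$ patterns, so a pigeonhole over a long block finishes the argument; here the capacity-$4$ centers and the six available lengths make any periodic structure murkier, and a range as short as $[0,21]$ is unlikely to admit a clean period argument. I therefore anticipate replacing the long chain by a finite case analysis on a critical sub-family---plausibly the length-$2$ and length-$7$ intervals over a short sub-window---and verifying directly that every white/black colouring of that sub-family already contains a monochromatic claw. The delicate point is calibrating the length set $\{2,\dots,7\}$ and the range $[0,21]$ so that they are simultaneously rich enough to force the contradiction and small enough to keep the case analysis bounded.
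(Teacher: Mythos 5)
Your first two parts are correct, and the second is arguably nicer than the paper's. The span observation gives $\psi \le 4$ (it is a fleshed-out version of the paper's one-line remark that the maximum length $7$ equals $1 + 3\cdot 2$), and your explicit claw with center $(0,7)$ and leaves $(0,2),(2,4),(4,6),(6,8)$ gives $\psi \ge 4$. Your $3$-partition by length classes $\{2,3\}$, $\{4,5\}$, $\{6,7\}$ is fully constructive: in each class the inner leaf of any would-be $K_{1,3}$ would have to fit in a span of length at most $\ell_{\max}-2 < \ell_{\min}$, so each class is $K_{1,3}$-free. The paper instead gets this part non-constructively by citing the upper bound $\check\kappa(4,2)\le 3$ from Corollary~\ref{cor:kappa'}.

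The genuine gap is the third claim, which is the entire point of the lemma: you never prove that $\J_4$ admits no $2$-partition into subgraphs with claw number at most $2$. You state two sound local colouring rules, correctly predict that the periodicity/chain argument of Lemma~\ref{lem:w5v2} will not transfer (capacity-$4$ centers, six lengths, and a window as short as $[0,21]$ admit no clean periodic structure), and then explicitly defer to a finite case analysis on a ``critical sub-family'' that you anticipate but never exhibit or calibrate. As it stands this is a plan, not a proof. The difficulty you are deferring is real: the paper itself gives no hand argument here, but instead proves non-existence by a computer-assisted branch-and-bound search --- sorting the $105$ intervals of $\J_4$ by right endpoint and exhaustively extending all good partial $2$-colourings, verifying that none survives. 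To complete your proposal you would either have to actually carry out and check the case analysis (plausibly hundreds of cases, since local rules alone do not visibly force a contradiction), or fall back on an exhaustive computation as the paper does.
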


\begin{proof}
The interval graph represented by $\J_4$ clearly has claw number $4$,
since the maximum length $7$ is only one plus three times the minimum length $2$.
By the upper bound for the $v=2$ case of Corollary~\ref{cor:kappa'},
we have $\check\kappa(4,2) \le 3$, and hence
the graph represented by $\J_4$ admits a vertex partition into three subgraphs
with claw number at most $2$.
We say that a $2$-partition of a family of intervals is \emph{bad}
if there are four intervals representing a star $K_{1, 3}$ in the same part,
and say that it is \emph{good} otherwise.
Assisted by a computer program,
we next show that $\J_4$ admits no good $2$-partition.

Note that $\J_4$ contains exactly $22 - \ell$ intervals of each length $\ell$,
$2 \le \ell \le 7$.
The total number of intervals in $\J_4$
is $n = 20 + 19 + 18 + 17 + 16 + 15 = 105$.
A brute-force algorithm that enumerates all $2^{105}$ $2$-partitions of $\J_4$
is obviously too slow, but we can speed it up by a standard branch-and-bound technique.

The algorithm works as follows.
Sort the intervals in $\J_4$ by increasing right endpoint,
breaking ties by decreasing left endpoint.
Then call the following recursive function with a partial $2$-partition of $\J_4$
with the first interval in the first part.

Given a good partial $2$-partition of the first $i$ intervals in $\J_4$,
the recursive function reports the $2$-partition and returns if $i = n$.
Otherwise, it checks whether
the next interval in $\J_4$ can be added to either part
without forming a star $K_{1,3}$,
then recurses on the resulting at most two
good partial $2$-partitions of the first $i+1$ intervals.

We wrote a C program implementing this simple branch-and-bound algorithm;
see the source code in the appendix.
On a typical laptop computer,
it took less than two minutes for the program to verify that
$\J_4$ admits no good $2$-partition.
\end{proof}

Recall~\eqref{eq:v2} earlier.
By Lemma~\ref{lem:w6v2},
Lemma~\ref{lem:w5v2},
and
Lemma~\ref{lem:w4v2},
we have $\check\kappa(4,2) = \check\kappa(5,2) = \check\kappa(6,2) = 3$.
The previous three lemmas also imply that
interval graphs admitting no vertex partition into less than three proper
interval graphs do not always contain $H_3$ (which has claw number $9$) as an
induced subgraph.
This partially answers an open question of Gardi~\cite[page~53]{Ga11}
in the negative.

\bigskip
The following lemma shows that $\check\kappa(3,2) \le 2$,
and hence $\check\kappa(3,2) = 2$.

\begin{lemma}\label{lem:w3v2}
Any interval graph with claw number at most $3$ admits a vertex partition into
two subgraphs with claw number at most $2$.
\end{lemma}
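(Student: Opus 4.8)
The plan is to reduce the statement to a question about the disjoint representatives produced by the sweepline algorithm. Fix an interval representation of $G$ with distinct integer endpoints and run the sweepline algorithm to obtain the pairwise-disjoint representatives $T_1, T_2, \ldots, T_j$, ordered from left to right, together with the covering cliques $\I_1, \ldots, \I_j$ with $T_i \in \I_i$. First I would record the structural consequence of the hypothesis $\psi(G) \le 3$: since the $T_i$ are pairwise disjoint, no interval can meet four of them (that would give an induced $K_{1,4}$), and because each interval is connected, the representatives it meets form a contiguous block. Hence every interval $I$ meets a block $T_{\ell(I)}, \ldots, T_{\rho(I)}$ of at most three consecutive representatives, i.e.\ $\rho(I) - \ell(I) \le 2$. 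I would also use the elementary observation that in any induced claw the center contains its middle leaf, so that the three leaves of a monochromatic claw are pairwise disjoint and occupy essentially distinct representative slots within the center's block.

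With this block structure in hand, the heart of the argument is to $2$-color the intervals so that neither color class contains an induced $K_{1,3}$, equivalently so that each class is a proper interval graph. Because any claw lives inside a window of at most three consecutive representatives, the color of an interval should only have to depend on local data around its block; I would therefore aim either for a coloring rule determined by the indices $\ell(I)$ and $\rho(I)$ (a parity rule refined by block size, say), or for a left-to-right greedy assignment that maintains, as an invariant, a safe coloring of the last two representatives. In either case the verification amounts to checking that no center together with three pairwise-disjoint same-color leaves can all receive the same color, by a case analysis on the relative positions of the three leaves' blocks inside the center's window of size at most three.

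The main obstacle is that the obvious rule---coloring $I$ by the parity of $\ell(I)$---is \emph{not} sufficient: three pairwise-disjoint leaves can share a single common leftmost representative $T_k$ (precisely when $T_k$ itself stabs all three, which is permitted since it only forces $\psi(G) \ge 3$), and then those three leaves would be monochromatic and could complete a monochromatic claw together with a same-colored center. The crux is therefore to design a coloring rule, together with an accompanying invariant on the colors of the recent representatives, that rules out exactly these degenerate configurations while keeping both classes claw-free. I expect the decisive step to be showing that the greedy assignment never gets stuck---that at every step at least one color avoids creating a monochromatic claw, as either a center or a leaf, now or unavoidably later. This is where the bounded window size $\rho(I) - \ell(I) \le 2$ must be exploited in full, and it is the step I anticipate being the most delicate.
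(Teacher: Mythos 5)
Your setup coincides with the paper's own (sweepline representatives $T_1,\ldots,T_j$ with covering cliques $\I'_1,\ldots,\I'_j$, and the observation that $\psi(G)\le 3$ confines every interval to a window of at most three consecutive representatives), but your argument stops exactly where the proof has to begin: you never specify a coloring rule, and you yourself defer ``design a coloring rule, together with an accompanying invariant'' and the proof that ``the greedy assignment never gets stuck'' to future work. That step is the entire content of the lemma, so this is a genuine gap, not a proof. The missing idea is a fixed rule, not a greedy one: color the cliques in blocks of two, placing $\I'_i$ in the first part when $i \bmod 4 \in \{0,1\}$ and in the second part when $i \bmod 4 \in \{2,3\}$. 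If one part contained an induced claw with center $O \in \I'_o$ and leaves in $\I'_a,\I'_b,\I'_c$ with $a<b<c$ (the leaves lie in three distinct cliques, since intervals of one clique pairwise intersect), then, because $o,a,b,c$ all have residues in the same pair, a short case analysis on $o \bmod 4$ shows that either $a \le o-3$ or $c \ge o+3$. If $a \le o-3$, the leaf $A \in \I'_a$ contains the witness segments $S_a,\ldots,S_{o-1}$ (it contains $S_a$ and reaches $O$, whose left endpoint is at least $r_{o-1}$), so $A$ meets the pairwise-disjoint intervals $T_a,\ldots,T_{o-1},O$, which are $o-a+1 \ge 4$ in number: a $K_{1,4}$ centered at $A$. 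If $c \ge o+3$, then symmetrically $O$ meets the pairwise-disjoint intervals $T_o,\ldots,T_{c-1},C$: a $K_{1,4}$ centered at $O$. Either way $\psi(G) \le 3$ is contradicted, so both parts have claw number at most $2$.

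Two further points. First, your diagnosis of why the naive parity rule fails is itself wrong: three pairwise-disjoint leaves can never share the same leftmost representative $T_k$, because any interval whose leftmost met representative is $T_k$ must contain the segment $S_k=(r_k-1,r_k)$ (it is still unassigned at round $k$, so its right endpoint is at least $r_k$, and meeting $T_k$ forces its left endpoint below $r_k$), hence two such intervals always intersect. The parity rule does fail, but for a different reason: the center can lie in clique $o$ with leaves in cliques $o-2$, $o$, $o+2$, all of the same parity. This really occurs with claw number exactly $3$: take $T_i=(3i,3i+2)$ for $0\le i\le 4$ together with $A=(1,7)$, $O=(5,13)$, $B=(7,10)$, $C=T_4$; then $O,A,B,C$ is an induced claw whose cliques are $2,0,2,4$, and the whole graph has claw number $3$. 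The blocks-of-two rule is designed exactly so that clique indices differing by $2$ never land in the same part, which kills this configuration. Second, your hope that a left-to-right greedy assignment cannot get stuck is unsupported by anything in your outline; the fixed mod-$4$ rule avoids having to prove any such invariant.
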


\begin{proof}
Let $G$ be an interval graph,
and let $\I$ be an interval representation of $G$ with integer endpoints.
Let $j = \vartheta(G)$.
Run the sweepline algorithm to obtain an independent set of $j$ intervals $T_i$
in $\I$,
and a vertex partition of $\I$ into $j$ cliques $\I'_i$,
where $T_i \in \I'_i$ for $1 \le i \le j$.
For $h \in \{0,1,2,3\}$,
let $\J_h$ be the union of the subfamilies $\I'_i$
with $i \bmod 4 = h$.
We next show that the subgraph of $G$ induced by $\J_0 \cup \J_1$ has
claw number at most $2$. The argument for $\J_2 \cup \J_3$ is similar.

Suppose for contradiction that $\J_0 \cup \J_1$ contains four intervals
$O,A,B,C$ representing a star $K_{1,3}$,
where $O$ is the center, and $A,B,C$ are the three leaves.
Suppose that $O \in \I'_o$, $A \in \I'_a$, $B \in \I'_b$, and $C \in \I'_c$,
where $a < b < c$.
Note there are only two different values $0$ and $1$ for $o,a,b,c$ modulo $4$.
We claim that
if $a > o - 3$, then
$c \ge o + 3$.
Consider two cases.
If $o \bmod 4 = 0$,
then the condition $a > o - 3$ implies that $a \ge o$,
and hence $b \ge o + 1$, and hence $c \ge o + 4$.
If $o \bmod 4 = 1$,
then the condition $a > o - 3$ implies that $a \ge o - 1$,
and hence $b \ge o$, and hence $c \ge o + 3$.

In summary, we must have either $a \le o - 3$ or $c \ge o + 3$:
\begin{itemize}\setlength\itemsep{0pt}

\item
If $a \le o - 3$,
then the $o-a$ intervals $T_i$ for $a \le i \le o - 1$ and the interval $O$
would together be $o-a+1 \ge 4$ pairwise-disjoint intervals intersecting $A$.

\item
If $c \ge o + 3$,
then the $c-o$ intervals $T_i$ for $o \le i \le c - 1$ and the interval $C$
would together be $c-o+1 \ge 4$ pairwise-disjoint intervals intersecting $O$.

\end{itemize}
In both cases, we reach a contradiction to
our assumption that $G$ has claw number at most $3$. 
\end{proof}

We finally prove that $\check\kappa(5,3) = 2$.
The lower bound of $\check\kappa(5,3) \ge 2$ is obvious.
The following lemma gives the tight upper bound of $\check\kappa(5,3) \le 2$:

\begin{lemma}\label{lem:w5v3}
Any interval graph with claw number at most $5$ admits a vertex partition into
two subgraphs with claw number at most $3$.
\end{lemma}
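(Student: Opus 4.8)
The plan is to imitate the proof of Lemma~\ref{lem:w3v2}. Run the sweepline algorithm on an integer representation $\I$ of $G$ to obtain $j=\vartheta(G)$ cliques $\I'_i$ with independent representatives $T_i\in\I'_i$, $1\le i\le j$, and then assign each clique index to one of two colours. As in Lemma~\ref{lem:w3v2}, it suffices to show that if one colour class contained an induced $K_{1,4}$ --- a centre $O\in\I'_o$ and four pairwise non-adjacent leaves $B_1,\dots,B_4$ lying in cliques $c_1<c_2<c_3<c_4$, all of the colour of $\I'_o$ --- then some single interval of $G$ would intersect six pairwise-disjoint representatives $T_i$. Those six form an induced $K_{1,6}$, contradicting $\psi(G)\le 5$; hence each colour class has claw number at most $3$.

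First I would record the geometry of the sweepline that the whole argument rests on. From the invariant that an interval deleted in round $i$ contains $S_i$ while one surviving past round $i$ has left endpoint at least $r_i$, one gets: (i) if an interval meets $T_i$ and $T_{i'}$ with $i<i'$ then it meets every $T_k$ with $i\le k\le i'$, so each interval meets a contiguous run of representatives, and by $\psi(G)\le 5$ a run of at most five; (ii) if $X\in\I'_a$ meets $Y\in\I'_b$ with $a<b$ then $X$ meets $T_a,\dots,T_{b-1}$; and (iii) the centre $O\in\I'_o$ meets no $T_i$ with $i<o$, and has at most one leaf in a clique of index below $o$, since two such leaves would both contain the left endpoint $l_O$ of $O$ in their interiors and so could not be non-adjacent. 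By (iii) at least three leaves lie in cliques of index $\ge o$, so the rightmost leaf and (ii) exhibit a star $K_{1,\,c_4-o+1}$ centred at $O$, while the unique left leaf, if present, exhibits $K_{1,\,o-c_1+1}$ centred at $B_1$. Either star already has six leaves as soon as the corresponding extreme leaf lies at clique-distance $\ge 5$ from $o$, so the only surviving case is that all four leaves fall in the window $[o-4,\,o+4]$.

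This clustered case is the crux, and it cannot be dispatched by any fixed periodic colouring of the clique indices: a short counting argument (the same bookkeeping that makes the period-$4$ colouring of Lemma~\ref{lem:w3v2} work for $v=2$) shows that, to block every clustered configuration, each colour class would have to be so sparse that the two classes could not together partition the index set. The configurations that resist a uniform colouring are exactly those combining a co-clique leaf in $\I'_o$ with a single long left leaf at clique-distance at most $4$ and tightly packed right leaves; none of the three elementary stars above then reaches six. I would therefore abandon the periodic colouring and colour the cliques adaptively from left to right, maintaining the invariant --- localised, by (i), to windows of five consecutive cliques --- that neither colour ever accumulates four pairwise-disjoint same-coloured neighbours of a common interval. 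Making this invariant precise and verifying that it can always be preserved when the next clique is coloured, especially in the presence of a co-clique leaf or of the long left leaf, is where the real work lies and is the step I expect to be the main obstacle.
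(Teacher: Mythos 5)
Your proposal is not a proof: it sets up the sweepline machinery and correctly reduces to the case where all four leaves lie in cliques with indices in $[o-4,\,o+4]$, but at exactly that point it stops. You yourself acknowledge that this ``clustered case'' defeats every fixed periodic colouring of the clique indices, and your replacement --- an adaptive left-to-right colouring maintaining an invariant that neither colour ever accumulates four pairwise-disjoint same-coloured neighbours of a common interval --- is never made precise, let alone shown to be maintainable. That invariant is essentially a restatement of the lemma itself (``each colour class has claw number at most $3$''), so deferring it leaves the proof with a hole exactly where the main argument should be. The sweepline reduction genuinely works for $\check\kappa(3,2)\le 2$ (Lemma~\ref{lem:w3v2}), but as your own analysis indicates, positional information about clique indices is too coarse here: a leaf sharing the centre's clique, a single long left leaf, and tightly packed right leaves can all be monochromatic under an index-based rule without forcing six pairwise disjoint neighbours of any one interval.

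The paper's proof abandons positional colouring entirely and partitions by nesting depth instead: let $\I_{<2}$ be the intervals of $\I$ properly containing fewer than two pairwise disjoint intervals of $\I$, and let $\I_{\ge 2}$ be the rest. Any interval intersecting four pairwise disjoint intervals must properly contain at least the two middle ones, so no interval of $\I_{<2}$ can be the centre of a $K_{1,4}$ at all; hence the subgraph on $\I_{<2}$ has claw number at most $3$. And if $C,L_1,\dots,L_4$ were a $K_{1,4}$ inside $\I_{\ge 2}$ (leaves ordered left to right), then $C$ properly contains $L_2$ and $L_3$, each of which --- being in $\I_{\ge 2}$ --- properly contains two disjoint intervals of $\I$; together with $L_1$ and $L_4$ this yields six pairwise disjoint intervals all meeting $C$, contradicting $\psi(G)\le 5$. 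This containment-based split is the idea your approach lacks: it is insensitive to where intervals sit along the line, which is precisely why it handles the clustered configurations that break every index colouring.
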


\begin{proof}
Let $G$ be an interval graph with claw number at most $5$,
and let $\I$ be an interval representation of $G$.
Partition $\I$ into two subfamilies $\I_{< 2}$ and $\I_{\ge 2}$ of intervals
properly containing less than two and at least two, respectively,
disjoint intervals in $\I$.
Since any interval that intersects more than three disjoint intervals must properly contain
at least two of them,
the subgraph of $G$ induced by intervals in $\I_{< 2}$ clearly
has claw number at most $3$.
We claim that
the subgraph of $G$ induced by intervals in $\I_{\ge 2}$
has claw number at most $3$ too.

Suppose the contrary. Let $C$ and $L_1,L_2,L_3,L_4$ be intervals in
$\I_{\ge 2}$ representing the center and the four leaves of a star $K_{1,4}$,
where $L_1,L_2,L_3,L_4$ are disjoint and ordered from left to right.
Then $C$ properly contains $L_2$ and $L_3$.
Since each of $L_2$ and $L_3$ also properly contains at least two disjoint
intervals in $\I$, $C$ intersects at least six disjoint intervals in $\I$,
contradicting our assumption that $G$ has claw number at most~$5$.
\end{proof}

The proof of Theorem~\ref{thm:small} is now complete.

\section{Approximation algorithm}

In this section we prove Theorem~\ref{thm:approx}.

Albertson et~al.~\cite[Theorem~4]{AJHL89} showed that
$\check\kappa(w,1) \le w$ for any $w \ge 1$,
which implies that $\check\kappa(v+2,v) = \check\kappa(3,1) \le 3$ when $v = 1$.
When $v = 2$,
by our upper bound of
$\check\kappa(w,v) \le \lfloor \log_{v+1} 3(w-1)/2 \rfloor + 2$
for $w > v = 2$ in Corollary~\ref{cor:kappa'},
we have $\check\kappa(v+2,v) = \check\kappa(4,2) \le \lfloor \log_3 9/2 \rfloor + 2 = 3$.
When $v = 3$,
we have
$\check\kappa(v+2,v) = \check\kappa(5,3) \le 2$
by Lemma~\ref{lem:w5v3}.
When $v > 3$,
we have
$\check\kappa(v+2,v) \le \lfloor\log_{v+1} (v+1)/(v-2)\rfloor + 2 = 2$
by Corollary~\ref{cor:kappa'}.
In summary,
we have $\check\kappa(v+2,v) \le 3$ when $1 \le v \le 2$,
and $\check\kappa(v+2,v) \le 2$ when $v \ge 3$.
Since the proof of Albertson et~al.~\cite[Theorem~4]{AJHL89}
and our proofs of these upper bounds are constructive,
we have the following proposition:

\begin{proposition}\label{prp:v+2}
For any $v \ge 1$,
there is a polynomial-time algorithm that partitions
any interval graph with claw number at most $v+2$ into 
$t$ induced subgraphs with claw number at most $v$,
with $t = 3$ for $1 \le v \le 2$,
and $t = 2$ for $v \ge 3$.
\end{proposition}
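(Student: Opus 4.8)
The plan is to observe that Proposition~\ref{prp:v+2} merely repackages, as an explicit polynomial-time procedure, the upper bounds on $\check\kappa(v+2,v)$ that were just established in the paragraph preceding it. Nothing new needs to be proved combinatorially; the only task is to trace each bound back to the construction that witnesses it and to confirm that this construction runs in polynomial time. First I would split the argument along the same four ranges of $v$ used above: $v=1$, $v=2$, $v=3$, and $v>3$.

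For $v \ge 3$ the target is $t=2$. When $v=3$, the partition is produced directly by the proof of Lemma~\ref{lem:w5v3}: given an interval representation $\I$, place each interval into $\I_{<2}$ or $\I_{\ge 2}$ according to whether it properly contains fewer than two, or at least two, pairwise-disjoint intervals of $\I$. This classification is computable in polynomial time by a single sweep over the (integer) endpoints, so the two-part partition is obtained in polynomial time. When $v>3$ (so $v\ge 4$), the bound $t=2$ follows from the constructive lower-bound direction of Theorem~\ref{thm:mu'} with $k=2$: since $(v-2)(v+1)\ge v+2$ in this range, a graph of claw number at most $v+2$ is partitioned into two subgraphs of claw number at most $v$ by running the sweepline algorithm to obtain the clique cover, collecting every $s$-th clique (with $s$ as prescribed there) into one part, and handling the remainder by Lemma~\ref{lem:vartheta}.

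For $1 \le v \le 2$ the target is $t=3$. When $v=1$ the partition comes from the construction of Albertson et~al.~\cite[Theorem~4]{AJHL89}, whose proof is constructive and runs in polynomial time. When $v=2$, the bound $t=3$ follows from Theorem~\ref{thm:mu'} with $k=3$: here $2(v+1)^{k-1}/3 = 6 \ge v+2$, so a graph of claw number at most $v+2$ is partitioned into three subgraphs of claw number at most $2$ by the same sweepline-and-Lemma~\ref{lem:vartheta} construction.

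The one step that genuinely needs checking is the running time of the recursive construction underlying Theorem~\ref{thm:mu'}, namely the induction in Lemma~\ref{lem:vartheta}. Here the recursion depth equals the number of parts produced, which is at most $t \le 3$ in every case of the proposition; each level performs one sweepline pass, one ``every $s$-th clique'' selection, and a decomposition into connected components, all of which are polynomial in the number of vertices. Hence the total work is polynomial, and assembling the four cases yields the claimed algorithm. I expect no real obstacle beyond this bookkeeping: the combinatorial content is already contained in the earlier results, so the proposition is essentially their constructive restatement.
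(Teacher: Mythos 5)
Your proposal is correct and follows essentially the same route as the paper: the paper likewise splits into the four cases $v=1$ (Albertson et~al.), $v=2$ and $v>3$ (the constructive upper bounds from Corollary~\ref{cor:kappa'}, which are themselves just the sweepline-plus-Lemma~\ref{lem:vartheta} construction of Theorem~\ref{thm:mu'} that you invoke directly), and $v=3$ (Lemma~\ref{lem:w5v3}), and then observes that all of these proofs are constructive. Your only addition is the explicit check that the recursive construction runs in polynomial time, which the paper leaves implicit in the word ``constructive.''
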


Our approximation algorithm 
for \textsc{Min-Partition$(v)$} works as follows.
Given an interval graph $G$,
first obtain an interval representation $\I$ of $G$.
Initialize $\J \gets \I$ and $i \gets 1$.
While $\J$ is not empty,
let $\J'$ be the subfamily of intervals in $\J$
each properly containing at least $v+1$ disjoint intervals in $\J$,
let
$\I_i \gets \J\setminus\J'$,
then update
$\J \gets \J'$ and $i \gets i + 1$.

Let $k$ be the maximum round $i$ in which $\J$ is not empty.
Then $(\I_1,\ldots,\I_k)$ is a $k$-partition of $\I$.
For $1 \le i \le k$,
since each interval in $\I_i$ properly contains at most $v$ disjoint intervals
in $\I_i$,
the subgraph of $G$ induced by each subfamily $\I_i$
has claw number at most $v+2$.
Apply the algorithm in Proposition~\ref{prp:v+2} to partition
the subgraph of $G$ induced by each subfamily $\I_i$ into $t$ subgraphs,
with $t = 3$ for $1 \le v \le 2$,
and $t = 2$ for $v \ge 3$.
Then $G$ is partitioned into $kt$ subgraphs with claw number at most $v$.

Observe that for $2 \le i \le k$,
each interval in $\I_i$ properly contains at least $v+1$ disjoint intervals
in $\I_{i-1}$.
Thus $G$ contains $A_{k,v}$ as an induced subgraph,
and it follows by Lemma~\ref{lem:A} that $G$ admits no vertex partition into
less than $k$ induced subgraphs with claw number at most $v$.
Thus our algorithm for
\textsc{Min-Partition$(v)$}
achieves an approximation ratio of $t$,
with $t = 3$ for $1 \le v \le 2$,
and $t = 2$ for $v \ge 3$.
This completes the proof of Theorem~\ref{thm:approx}.

\section{Open questions}

For $k \ge 2$ and $v \ge 2$,
is there a polynomial-time algorithm for \textsc{$k$-Partition$(v)$} in interval graphs?
In particular, for $k = v = 2$,
is there a polynomial-time algorithm that decides whether an interval graph
admits a vertex partition into two proper interval graphs?
Can the gaps between lower and upper bounds in
Theorem~\ref{thm:mu'} and Corollary~\ref{cor:kappa'}
be reduced?

\appendix

\vspace{\stretch1}
\paragraph{A counterexample}

Gardi reported~\cite[Lemma~2.3]{Ga11} that for $t > 1$,
any $K_{1,t}$-free interval graph admits a vertex partition into
$\big\lceil\log_3\frac{3t-3}2\big\rceil$ proper interval graphs.
By substituting $t$ with $w+1$,
this would imply that
$\check\kappa(w,2) \le \lceil\log_3 3w/2 \rceil$ for $w \ge 1$,
in particular,
$\check\kappa(w,2) \le 2$ for $w \le 6$.
But this cannot hold,
since we proved in Theorem~\ref{thm:small} that
$\check\kappa(4,2) = \check\kappa(5,2) = \check\kappa(6,2) = 3$;
see our Lemmas~\ref{lem:w6v2}, \ref{lem:w5v2}, and~\ref{lem:w4v2}.
We refer to Figure~\ref{fig:gardi} for a counterexample that identifies
a flaw in the proof of~\cite[Lemma~2.3]{Ga11}.
This flaw also affects several other results,
including
an upper bound on $\kappa(n,2)$~\cite[Proposition~2.7]{Ga11}.

\begin{figure}[htbp]
\centering\includegraphics{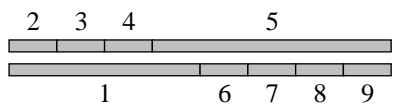}
\caption{A counterexample to the correctness of
the algorithm \textsc{Color-Cliques-Logarithmic}
in Gardi's constructive proof of~\cite[Lemma~2.3]{Ga11} with $t = 6$.
The nine intervals are first partitioned into seven cliques
$C_0 = \{1, 2\}$, $C_1 = \{3\}$, $C_2 = \{4\}$, $C_3 = \{5,6\}$
$C_4 = \{7\}$, $C_5 = \{8\}$, $C_6 = \{9\}$
by the algorithm \textsc{Canonical-Partition-Into-Cliques},
and then grouped into two parts
$\C_0 = \{3,4,7,8\}$ and $\C_1 = \{1,2,5,6,9\}$
by the algorithm \textsc{Color-Cliques-Logarithmic}.
The subgraph represented by $\C_1$ contains an induced star $K_{1,3}$
with center $5$ and three leaves $1,6,9$,
and hence is not a proper interval graph as claimed.}
\label{fig:gardi}
\end{figure}




\newpage
\section*{Source code for Lemma~\ref{lem:w4v2}}

\lstset{basicstyle=\footnotesize\ttfamily,
keywordstyle=\ttfamily,
showstringspaces=false,
tabsize=4}
\lstinputlisting{w4v2.c}


\begin{thebibliography}{99}

\bibitem{Ac97}
D. Achlioptas.
The complexity of $G$-free colourability.
\emph{Discrete Mathematics},
165/166:21--30, 1997.

\bibitem{AC10}
A. Adiga and L. S. Chandran.
Cubicity of interval graphs and the claw number.
\emph{Journal of Graph Theory},
65:323--333, 2010.

\bibitem{AJHL89}
M. O. Albertson, R. E. Jamison, S. T. Hedetniemi, and S. C. Locke.
The subchromatic number of a graph.
\emph{Discrete Mathematics},
74:33--49, 1989.

\bibitem{BW99}
K. P. Bogart and D. B. West.
A short proof that `proper = unit'.
\emph{Discrete Mathematics},
201:21--23, 1999.

\bibitem{BFNW02}
H. Broersma, F. V. Fomin, J. Ne\v{s}et\v{r}il, and G. J. Woeginger.
More about subcolorings.
\emph{Computing},
69:187-203, 2002.

\bibitem{Fa04}
A. Farrugia.
Vertex-partitioning into fixed additive induced-hereditary properties is
NP-hard.
\emph{The Electronic Journal of Combinatorics},
11:\#R46, 2004.

\bibitem{Fi85}
P. C. Fishburn.
\emph{Interval Orders and Interval Graphs: A Study of Partially Ordered Sets}.
John Wiley \& Sons, 1985.

\bibitem{GGPR10}
R. Gandhi, B. Greening Jr., S. Pemmaraju, and R. Raman.
Sub-coloring and hypo-coloring interval graphs.
\emph{Discrete Mathematics, Algorithms and Applications},
02:\#331, 2010.

\bibitem{Ga11}
F. Gardi.
On partitioning interval graphs into proper interval subgraphs and related problems.
\emph{Journal of Graph Theory},
68:38--54, 2011.

\bibitem{Go04}
M. C. Golumbic.
\emph{Algorithmic Graph Theory and Perfect Graphs},
Second Edition.
Elsevier, 2004.

\end{thebibliography}
\end{document}